\theoremstyle{plain}
\newtheorem{thm}{Theorem}
\newtheorem{cor}{Corollary}
\newtheorem{lem}[cor]{Lemma}
\newtheorem{prop}[cor]{Proposition}
\theoremstyle{definition}
\newtheorem{definition}[cor]{Definition}
\newtheorem{remark}[cor]{Remark}
\numberwithin{cor}{section}
\numberwithin{equation}{section}
\newcommand{\R}{\mathbb{R}}
\newcommand{\Q}{\mathbb{Q}}
\newcommand{\N}{\mathbb{N}}
\renewcommand{\d}{n}
\newcommand{\Rd}{\mathbb R^\d}
\newcommand{\ep}{\varepsilon}
\newcommand{\barH}{\overline{H}}
\newcommand{\E}{\mathbb{E}}
\newcommand{\Prob}{\mathbb{P}}
\DeclareMathOperator*{\osc}{osc}
\DeclareMathOperator{\USC}{USC}
\DeclareMathOperator{\LSC}{LSC}
\DeclareMathOperator{\BUC}{BUC}
\DeclareMathOperator*{\esssup}{ess\,sup}
\DeclareMathOperator*{\essinf}{ess\,inf}
\DeclareMathOperator*{\argmin}{argmin}
\begin{document}

\title{Stochastic homogenization of $L^\infty$ variational problems}

\author{Scott N. Armstrong}
\address{Department of Mathematics\\ University of Wisconsin, Madison\\ 480 Lincoln Drive\\
Madison, Wisconsin 53706.}
\email{armstron@math.wisc.edu}
\author{Panagiotis E. Souganidis}
\address{Department of Mathematics\\ The University of Chicago\\ 5734 S. University Avenue
Chicago, Illinois 60637.}
\email{souganidis@math.uchicago.edu}
\date{\today}
\keywords{stochastic homogenization, $L^\infty$ calculus of variations, eikonal equation, Hamilton-Jacobi equation}
\subjclass[2010]{35B27}

\begin{abstract}
We present a homogenization result for $L^\infty$ variational problems in general stationary ergodic random environments. By introducing a generalized notion of distance function (a special solution of an associated eikonal equation) and demonstrating a connection to absolute minimizers of the variational problem, we obtain the homogenization result as a consequence of the fact that the latter homogenizes in random environments.
\end{abstract}
\maketitle

\section{Introduction} \label{I}

We study absolute minimizers $u^\ep$ of the $L^\infty$ variational problem
\begin{equation} \label{vpep}
\mbox{minimize} \quad \esssup_{x\in U} H\big(Dv(x),\tfrac x\ep, \omega\big) \quad \mbox{subject to} \quad v = g \ \ \mbox{on} \ \partial U.
\end{equation}
The Hamiltonian $H=H(p,y,\omega)$ is a function $H:\R^n\times\R^n\times \Omega\to\R$, with $\Omega$ a probability space. Roughly speaking, $H$ is assumed to be convex and coercive in $p$, stationary and ergodic in $(y,\omega)$ and sufficiently regular in $(p,y)$. The precise assumptions are stated in Section~\ref{P}. The domain $U \subseteq \R^n$ is taken to be bounded and smooth and $g\in C(\partial U)$ is given.

Our main result is that, under certain natural conditions, an absolute minimizer $u^\ep$ of \eqref{vpep} converges, almost surely in $\omega$ and uniformly in $\overline U$, as $\ep \to 0$, to an absolute minimizer of a deterministic limiting $L^\infty$ variational problem. The effective Hamiltonian $\barH$ is the same as the one which arises in the random homogenization (see Souganidis~\cite{S} and Rezakhanlou and Tarver~\cite{RT}) of the first-order Hamilton-Jacobi equation
\begin{equation}\label{hj1}
w^\ep_t + H(Dw^\ep,\tfrac x\ep,\omega) = 0 \quad \mbox{in} \ \Rd\times (0,\infty).
\end{equation}
Our analysis brings to light the connection between these problems and strongly utilizes the fact that \eqref{hj1} homogenizes.

\nocite{AS2,ASo,BEJ,BJW,BPG,CdP,CdPP,JWY,LS}

Due to the fact that the $L^\infty$ norm is not strictly convex, minimization problems such as \eqref{vpep}, interpreted naively, are ``not properly localized" and in particular possess very severe nonuniqueness phenomena. The notion of \emph{absolute minimizer}, which is defined in Section~\ref{P}, was introduced long ago by Aronsson~\cite{A1,A2} to rectify this situation.

The theory of absolutely minimizing functions did not fully blossom until the work of Jensen~\cite{J}, who proved that absolute minimizers of $H(p) = |p|^2$ are characterized as the viscosity solutions of the degenerate elliptic equation
\begin{equation*}
\sum_{i,j=1}^\d u_{x_ix_j} u_{x_i} u_{x_j} = 0,
\end{equation*}
called \emph{infinity Laplace equation}. Since then, viscosity solution theoretic methods have been applied to the study of absolute minimizers to great effect. For general Hamiltonians which are $C^2$ and convex, absolute minimizers are characterized by the \emph{Aronsson equation}
\begin{equation}\label{aron}
\sum_{i,j=1}^\d H_{p_i} (Du,x) H_{p_j}(Du,x) u_{x_ix_j} + \sum_{i=1}^\d H_{x_i} (Du,x) H_{p_i} (Du,x) = 0.
\end{equation}
See Gariepy, Wang, Yu~\cite{GWY} and Yu~\cite{Y} for more on the Aronsson equation. We refer to Aronsson, Crandall and Juutinen~\cite{ACJ} for an introduction to $L^\infty$ variational problems.

Using this connection our main results then imply, for a smooth, convex and coercive Hamiltonian $H$, the homogenization of the boundary value problem
\begin{equation}\label{aron1}
\left\{ \begin{aligned}
& H_{p_i} (Du^\ep,\tfrac{x}{\ep}) H_{p_j}(Du^\ep,\tfrac{x}{\ep}) u^\ep_{x_ix_j} + \tfrac{1}{\ep} H_{x_i} (Du^\ep,\tfrac{x}{\ep}) H_{p_i} (Du^\ep,\tfrac{x}{\ep}) = 0 & \mbox{in} & \ U, \\
& u^\ep=g & \mbox{on} & \  \partial U,
\end{aligned} \right.
\end{equation}
to the deterministic problem
\begin{equation*}
\left\{ \begin{aligned}
& \overline H_{p_i} (D u) \overline H_{p_j}(D  u)  u_{x_ix_j} = 0  & \mbox{in} & \ U,\\
& u=g & \mbox{on} & \  \partial U,
\end{aligned} \right.
\end{equation*}
where we have employed the summation convention to simplify notation. We remark it is very difficult in most situations to determine whether $\overline H$ is even $C^1$, and for irregular $H$ the Aronsson equation is necessary but not known to be sufficient for the absolute minimizing property (see~\cite{ACJS}). Therefore, in our main results, which assert the homogenization of absolute minimizers, we prove even more.

In recent years, certain aspects of the theory of $L^\infty$ variational problems have been greatly simplified, and in particular, their study is no longer tethered to that of~\eqref{aron}. This modern point of view was initiated by Peres, Schramm, Sheffield and Wilson~\cite{PSSW} and more fully developed in the work of Armstrong and Smart~\cite{AS1,AS2} and Armstrong, Crandall, Julin and Smart~\cite{ACJS}. We adopt this perspective in this paper, and so we refer no further to the Aronsson equation.

With the precise definitions as well as hypotheses on $H$ postponed to the next section, and the identification of the effective Hamiltonian $\barH$ to Section~\ref{amdf}, and denoting $H^\ep(p,x,\omega) : = H(p,\tfrac x\ep,\omega)$, the main result is stated as follows.

\begin{thm} \label{main}
Assume that $H:\R^n\times\R^n\times\Omega \to \R$ satisfies \eqref{erghyp}, \eqref{Hstat}, \eqref{Hconv}, \eqref{Hcoe} and \eqref{Hec}, where $\Omega$ is a probability space as described in Section~\ref{P}. There exists an convex, continuous and coercive effective Hamiltonian $\barH:\R^n\to\R$ such that, if $u^\ep = u^\ep(x,\omega)$ is an absolute subminimizer (resp., superminimizer) for $H^\ep$ in $U$, then, almost surely in $\omega$, the function $u^*(x,\omega):= \limsup_{\ep\to 0} u^\ep(x,\omega)$ (resp., $u_*(x,\omega) : = \limsup_{\ep \to 0} u^\ep(x,\omega)$) is an absolute subminimizer (resp., superminimizer) for $\barH$ in $U$.
\end{thm}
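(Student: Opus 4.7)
The plan is to reduce the homogenization of absolute minimizers to that of the underlying eikonal equation, via the characterization of absolute sub/superminimizers by comparison with distance functions set up in Section~\ref{amdf}. Concretely, one establishes: (i) $H^\ep$-distance functions converge to $\overline H$-distance functions as $\ep \to 0$, almost surely and locally uniformly; (ii) the comparison-with-distance-functions property is stable under half-relaxed limits.

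I would first record from Section~\ref{amdf} the characterization that $u^\ep$ is an absolute subminimizer for $H^\ep$ in $U$ if and only if, for every subdomain $V \Subset U$ and every admissible $H^\ep$-distance function $d^\ep$ on $V$ (i.e.\ a maximal subsolution of the eikonal equation $H(Du, x/\ep, \omega) = \mu$ with prescribed pole), one has the implication $u^\ep \leq d^\ep$ on $\partial V \Rightarrow u^\ep \leq d^\ep$ in $V$; the dual statement holds for superminimizers. This is the point of view of \cite{AS1,AS2,ACJS}, and it ties the $L^\infty$ problem directly to the PDE~\eqref{hj1} whose homogenization is already available.

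Next, I would use the stochastic homogenization of \eqref{hj1} from \cite{S,RT} together with the control-theoretic (metric) representation of distance functions to conclude that, for each level $\mu$ exceeding the minimum of $\overline H$ and each base point $y$, the functions $d^\ep(\,\cdot\,, y;\mu,\omega)$ converge locally uniformly, almost surely, to the $\overline H$-distance function $\overline d(\,\cdot\,,y;\mu)$. A countable density argument over $(y,\mu)$ produces a single event of full probability off which this convergence holds for every admissible $(y,\mu)$ simultaneously. The identification of the effective Hamiltonian $\overline H$ appearing here with the one arising from the $L^\infty$ problem is precisely what Section~\ref{amdf} is designed to supply.

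The final step is to pass to the limit. Let $u^\ep$ be an absolute subminimizer of $H^\ep$ and let $u^*$ be its upper half-relaxed limit. Fix $V \Subset U$ and an $\overline H$-distance function $\overline d$ on $V$ with $u^* < \overline d$ on $\partial V$. By the uniform convergence $d^\ep \to \overline d$ and the definition of $u^*$, we obtain $u^\ep \leq d^\ep$ on $\partial V$ for all sufficiently small $\ep$, and hence $u^\ep \leq d^\ep$ in $V$ by Step~1. Taking upper half-relaxed limits yields $u^* \leq \overline d$ in $V$. A routine approximation removes the strict inequality on $\partial V$ (by shifting $\overline d$ upward by a vanishing constant, or by shrinking $V$), yielding the characterizing comparison for $u^*$ with respect to $\overline H$-distance functions, and so $u^*$ is an absolute subminimizer for $\overline H$. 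The superminimizer argument is symmetric, working with the lower half-relaxed limit and the opposite comparison. The main technical obstacle is Step~2: the uniformity of the convergence of distance functions in $(y,\mu)$, particularly as $\mu$ approaches the degenerate minimum level of $\overline H$, together with the singular nature of $d^\ep$ at the pole, which requires working on punctured subdomains and exploiting the specific construction of distance functions from \cite{ACJS} adapted to the random setting.
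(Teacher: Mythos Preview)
Your proposal is correct and follows essentially the same route as the paper: characterize absolute sub/superminimizers via comparison with distance functions (Propositions~\ref{CC} and~\ref{CCprop}), homogenize the distance functions (Proposition~\ref{conehmg}), and pass to the half-relaxed limit. The paper phrases the final step as a contradiction rather than a direct argument, and establishes the convergence of the distance functions from scratch via the subadditive ergodic theorem and the macroscopic problem rather than by invoking~\cite{S,RT} as a black box, but the overall strategy is the same.
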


\begin{thm}\label{mainduh}
In addition to the hypotheses of Theorem~\ref{main}, assume that the set $\argmin \overline H$ has empty interior and $U$ is bounded and smooth. Fix $g\in C(\partial U)$ and suppose that $u^\ep(\cdot,\omega)\in C(\overline U)$ is an absolute minimizer for $H^\ep$ in $U$ such that $u^\ep = g$ on $\partial U$. Then $u^\ep$ converges, almost surely in $\omega$ and uniformly on $\overline U$, to the unique absolute minimizer $u$ for $\barH$ in $U$ subject to $u=g$ on $\partial U$.
\end{thm}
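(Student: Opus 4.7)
The plan is to apply the Barles--Perthame half-relaxed limit method, using Theorem~\ref{main} for the interior information and a comparison principle for the effective problem to close the argument. Define
\begin{equation*}
u^*(x,\omega) := \limsup_{\ep \to 0,\, y \to x} u^\ep(y,\omega), \qquad u_*(x,\omega) := \liminf_{\ep \to 0,\, y \to x} u^\ep(y,\omega)
\end{equation*}
on $\overline U$. First I would show that $\{u^\ep(\cdot,\omega)\}$ is uniformly bounded on $\overline U$: the coercivity hypothesis \eqref{Hcoe} allows the construction of affine or cone-shaped global sub/super-minimizers for $H^\ep$, independent of $\ep$ and $\omega$, whose boundary values dominate or are dominated by $g$. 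Comparing $u^\ep$ against these barriers gives an $\ep$-uniform sup-norm bound, so $u^*$ and $u_*$ are bounded USC and LSC functions on $\overline U$ satisfying $u_* \leq u^*$.

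By Theorem~\ref{main}, almost surely in $\omega$, $u^*$ is an absolute subminimizer and $u_*$ is an absolute superminimizer for $\barH$ in $U$. The main task that remains is to verify the boundary condition
\begin{equation*}
u^*(x,\omega) \leq g(x) \leq u_*(x,\omega) \quad \text{for every } x \in \partial U,
\end{equation*}
and then to conclude by comparison. For the boundary step I would fix $x_0 \in \partial U$ and build local barriers using the generalized distance function of Section~\ref{amdf}. Since $\partial U$ is smooth, uniform interior and exterior sphere conditions hold at $x_0$, and radial distance-function solutions of the associated eikonal equation (with slopes chosen large via coercivity, shifted to equal $g(x_0) \pm \delta$ at $x_0$) dominate or are dominated by $g$ on $\partial U$ near $x_0$ while staying appropriately ordered away from $x_0$. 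The stochastic homogenization of the distance function, which is also the engine behind Theorem~\ref{main}, permits passage of these $\ep$-level barriers to the limit, whereupon sending $\delta \to 0$ yields the boundary inequality.

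The proof then concludes by comparison. The hypothesis that $\argmin \barH$ has empty interior is precisely what the framework of Armstrong--Smart~\cite{AS1,AS2} and Armstrong--Crandall--Julin--Smart~\cite{ACJS} requires to ensure that an absolute subminimizer of $\barH$ lying below an absolute superminimizer on $\partial U$ lies below it throughout $U$. Applied to $u^*$ and $u_*$, this forces $u^* \leq u_*$ on $\overline U$, which combined with the reverse inequality gives a continuous limit $u := u^* = u_*$. A standard argument then upgrades this to uniform convergence $u^\ep \to u$, and the same comparison principle identifies $u$ as the unique absolute minimizer for $\barH$ with trace $g$ on $\partial U$. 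The main obstacle is the boundary step: orchestrating the generalized distance function at the $\ep$-scale to produce sharp enough barriers, and controlling their trace as $\ep \to 0$, is the delicate point where the smoothness of $\partial U$ and the eikonal homogenization of Section~\ref{amdf} must come together.
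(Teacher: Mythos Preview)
Your overall strategy---apply Theorem~\ref{main} to conclude that $u^*$ is an absolute subminimizer and $u_*$ an absolute superminimizer for $\barH$, then invoke the comparison principle of \cite{ACJS} (valid precisely because $\argmin \barH$ has empty interior) to force $u^* \leq u_*$---is exactly the paper's approach, and the paper's proof occupies only three sentences.

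Where you diverge is in the boundary treatment, and here you are making the problem harder than it is. You take half-relaxed limits and then propose an elaborate barrier construction, homogenizing distance functions centered at exterior sphere centers, to recover $u^* \leq g \leq u_*$ on $\partial U$. The paper instead uses the \emph{pointwise} limsup and liminf (this is how $u^*$ and $u_*$ are defined in the statement of Theorem~\ref{main}), for which the boundary condition is immediate: since $u^\ep(x,\omega) = g(x)$ for every $x \in \partial U$ and every $\ep > 0$, one has $u^*(x,\omega) = u_*(x,\omega) = g(x)$ on $\partial U$ with no work at all. What you flag as ``the main obstacle'' is therefore a non-issue in the paper's framing. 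Once comparison gives $u^* \equiv u_*$ on $\overline U$, uniform convergence of the full family follows by the standard argument. Your route is not wrong, and the barrier idea would work, but it is machinery the paper does not need.
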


A necessary and sufficient condition for a convex, continuous and coercive Hamiltonian $H=H(p)$ to have a comparison principle for its absolute (sub/super)minimizers is that the set $\argmin H := \{ p \in \Rd: H(p) = \min H\}$ has empty interior.  The necessity of this condition is obvious and the sufficiency is the main result of \cite{ACJS}, which also appeared in \cite{JWY} under more regularity assumptions on $H$. This explains the appearance of this extra hypothesis in Theorem~\ref{mainduh}, since the comparison principle allows us to obtain the uniqueness of the limit and hence the convergence of the full sequence. See Section~\ref{TOW} for further discussion on this topic as well as some examples for which $\argmin \overline H$ has empty interior.

Our approach to Theorems~\ref{main} and~\ref{mainduh} is outlined in Section~\ref{O}. The idea is to exhibit a connection between the absolute minimizers of \eqref{vpep} and special solutions (the ``cone functions'' for the Hamiltonian $H$) of the eikonal equation
\begin{equation} \label{e}
H(p+Du,y,\omega) = \mu.
\end{equation}
We identify $\overline H$ as the infimum over all $\mu$ for which \eqref{e} possesses a global subsolution which is strictly sublinear at infinity. We then demonstrate a comparison principle for \eqref{e} in exterior domains, provided $\mu > \overline H$, and with weak hypotheses on the growth of the solutions at infinity. This allows us to construct \emph{distance functions} for $H$, and we obtain the main results by homogenizing these distance functions.

We remark that distance functions for spacially-dependent Hamiltonians were previously introduced, using control theory formulas, by Champion and De Pascale \cite{CdP} who also obtained comparisons with absolute minimizers. Our approach is much different, brings to light the role of $\overline H$, and applies to a much more general class of convex Hamiltonians (e.g., we do not assume  $H(0,y) \equiv 0 \leq H(p,y)$, as in \cite{CdP}). Just before this paper was accepted, we learned of many similarities between some of our results in Section~\ref{amdf} and the work of Davini and Siconolfi~\cite{DS1}, who also study distance functions for stationary ergodic Hamiltonians. Their approach is different from ours and more similar to that of~\cite{CdP}.

The homogenization of $L^\infty$ variational problems in random environments has not been considered before. In the context of periodic media, Briani, Prinari and Garroni~\cite{BPG} constructed a candidate for the effective nonlinearity through a $\Gamma$-limit, although to our knowledge the periodic homogenization of absolute minimizers was left open.

This paper is organized as follows. The notation, terminology, assumptions, the definition of absolute minimizers and some auxiliary results are described in the next section. In Section~\ref{amdf} we introduce the distance functions, explain their connection with absolute minimizers and present the effective Hamiltonian and some of its properties. The homogenization of the distance functions as well as the proofs of the main results, subject to the postponement of some key ingredients, are presented in Section~\ref{O}. An example for which Theorem~\ref{mainduh} is in force is presented in Section~\ref{TOW}. In Section~\ref{MP} we study an eikonal equation and introduce a notion of generalized distance functions for Hamiltonians with spacial dependence, which we then homogenize in Sections~\ref{EH} and~\ref{H}.

\section{Preliminaries} \label{P}

We review the notation, discuss the random environment and state the assumptions on the Hamiltonian $H$, the definition of absolute minimizers as well as  some auxiliary results.

\subsection{Notations and conventions} \label{NC}
The symbols $C$ and $c$ denote positive constants, which may vary from line to line and, unless otherwise indicated, do not depend on $\omega$. We work in the $n$-dimensional Euclidean space $\R^n$ with $n \geq 1$. The sets of rational numbers and positive integers are denoted respectively by $\Q$ and $\N$. For $y \in \R^n$, we denote the Euclidean norm of $y$ by $|y|$. Open balls are written $B(y,r): = \{ x\in \R^n : |x-y| < r\}$, and we set $B_r : = B(0,r)$. If $E \subseteq \R^n$, then the closure of $E$ is denoted $\overline E$. We write $V \ll U$ if $V\subseteq \Rd$ is open and $\overline V \subseteq U$. If $U\subseteq \R^n$ is open, then $\USC(U)$, $\LSC(U)$, $\BUC(U)$, $C^{0,1}(U)$ and $C^{0,1}_{\text loc}(U)$ are respectively the sets of upper semicontinuous, lower semicontinuous, bounded and uniformly continuous, Lipschitz continuous and locally Lipschitz  continuous functions $U\to \R$.

We emphasize that, throughout this paper and unless explicitly stated to the contrary, all differential inequalities involving functions not known to be smooth are assumed to be satisfied in the viscosity sense. Wherever we refer to ``standard viscosity solution theory" in support of a claim, the details can always be traced in standard references like the book of Barles~\cite{Ba} and the \emph{User's Guide} of Crandall, Ishii and Lions~\cite{CIL}. Finally we note that we often abbreviate the phrase \emph{almost surely in} $\omega$ by ``a.s. in $\omega$."

We also stress that, while we often state or prove results only for absolutely subminimizing functions, obvious analogues for absolutely superminimizers immediately follow. This is because the definitions easily imply that $u$ is absolutely superminimizing for $H$ if and only if $-u$ is absolutely subminimizing for $\hat H(p,y) := H(-p,y)$. Hence the corresponding results for superminimizers can be easily obtained, provided we are careful to keep track of minus signs. This also accounts for the appearance of some sign changes in, for example, Definitions~\ref{CCnaive} and~\ref{CCgen} below.

\subsection{The random environment}
We consider a probability space $(\Omega, \mathcal F, \mathds P)$, and identify an instance of the ``medium" with an element $\omega \in\Omega$. The expectation of a random variable $f$ with respect to $\mathds P$ is written $\E f$. The probability space is endowed with a group $(\tau_y)_{y\in \R^n}$ of $\mathcal F$-measurable, measure-preserving transformations $\tau_y:\Omega\to \Omega$. We say that $(\tau_y)_{y\in \R^n}$ is  \emph{ergodic} if, for every $D\subseteq \Omega$ for which $\tau_z(D) = D$ for every $z\in \R^n$, either $\Prob[D] = 0$ or $\Prob[D] = 1$. An $\mathcal F$-measurable process $f$ on $\R^n \times \Omega$ is said to be \emph{stationary} if
\begin{equation*}
f(y,\tau_z \omega) = f(y+z,\omega) \quad \mbox{for every} \ y,z\in \Rd.
\end{equation*}
If $\phi:\Omega \to S$ is a random process, then $\tilde \phi(y,\omega) : = \phi(\tau_y\omega)$ is stationary. Likewise, if $f$ is a stationary function on $\Rd \times \Omega$, then $f(y,\omega) = f(0,\tau_y\omega)$. We note that the expectation of any measurable function of a stationary function is independent of the location in space, and if we are in the ergodic setting, then all supremal-type norms of a stationary function are a.s. constant.

We require a version of the subadditive ergodic theorem. To this end let $\mathcal{I}$ and $\{\sigma_t\}_{t\geq 0}$ be respectively the class of subsets of $[0,\infty)$ which consist of finite unions of intervals of the form $[a,b)$ and a semigroup of measure-preserving transformations on $\Omega$. A \emph{continuous subadditive process} on $(\Omega, \mathcal F, \Prob)$ with respect to $\{\sigma_t\}$ is a map
\begin{equation*}
Q: \mathcal I \rightarrow L^1(\Omega,\Prob)
\end{equation*}
which is
\begin{enumerate}
\item[(i)] {\it stationary}, i.e., $Q(I)(\sigma_t\omega) = Q( t + I )(\omega)$ for each $t>0$, $I \in \mathcal I$ and a.s. in $\omega$,
\item[(ii)] {\it continuous}, i.e., there exists $C> 0$ such that, for each $I\in \mathcal I$, $\E \big| Q(I) \big| \leq C |I|$, and
\item[(iii)] {\it subadditive}, i.e., if $I_1,\ldots I_k \in \mathcal I$ are disjoint and $I=\cup_j I_j$, then
$Q(I) \leq \sum_{j=1}^k Q(I_j).$
\end{enumerate}
We refer to Akcoglu and Krengel~\cite{AK} for a proof of the following proposition.

\begin{prop} \label{SAergthm}
Suppose that $Q$ is a continuous subadditive process. Then there is a random variable $a(\omega)$ such that, as $t\to\infty$,
\begin{equation*}
\frac1t Q([0,t))(\omega) \rightarrow a(\omega) \quad \mbox{a.s. in} \ \omega.
\end{equation*}
If, in addition, $\{ \sigma_t \}_{t>0}$ is ergodic, then $a$ is constant.
\end{prop}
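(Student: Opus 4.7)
The plan is to first establish the result along integer times via Kingman's discrete subadditive ergodic theorem, and then interpolate to arbitrary real $t$ using stationarity and the integrability condition (ii). Setting $X_n(\omega) := Q([0,n))(\omega)$ for $n\in\N$, stationarity gives $Q([n,n+m))(\omega) = X_m(\sigma_n\omega)$, so together with subadditivity,
$$X_{n+m}(\omega) \leq X_n(\omega) + X_m(\sigma_n\omega),$$
and (ii) yields $\E|X_n| \leq Cn$, so $X_n \in L^1(\Omega)$. Kingman's theorem applied to the measure-preserving system $\{\sigma_n\}_{n\in\N}$ then produces a $\sigma_1$-invariant $a \in L^1(\Omega)$ such that $X_n/n \to a$ almost surely.

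To interpolate, for $t>0$ let $n=\lfloor t\rfloor$ and $s=t-n \in [0,1)$; subadditivity applied to the decompositions $[0,t) = [0,n)\cup[n,t)$ and $[0,n+1) = [0,t)\cup[t,n+1)$, after rewriting the remainders via stationarity, produces the sandwich
$$X_{n+1}(\omega) - Q([s,1))(\sigma_n\omega) \leq Q([0,t))(\omega) \leq X_n(\omega) + Q([0,s))(\sigma_n\omega).$$
Since $X_n/n,\,X_{n+1}/n \to a$, dividing by $t$ reduces the claim to the vanishing statement $n^{-1} M(\sigma_n\omega) \to 0$ almost surely, where
$$M(\omega) := \sup_{s\in[0,1)}\bigl(|Q([0,s))(\omega)|+|Q([s,1))(\omega)|\bigr).$$
Once $M \in L^1(\Omega)$ is known, stationarity gives $\Prob[M(\sigma_n\omega)>\epsilon n] = \Prob[M>\epsilon n]$, so Markov's inequality combined with Borel--Cantelli finishes the passage to continuous time.

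The principal obstacle is the integrability of $M$, since hypothesis (ii) controls $\E|Q([0,s))|$ for each individual $s$ but not the supremum over $s \in [0,1)$. This is the essential content of the Akcoglu--Krengel continuous extension of Kingman's theorem \cite{AK}: one approximates $M$ by the dyadic suprema $M_k := \max_{0 \le j < 2^k} |Q([j2^{-k},(j+1)2^{-k}))|$, iterates the subadditive bound $Q([a,c)) \leq Q([a,b))+Q([b,c))$ to relate $M$ to $\sup_k M_k$, and invokes a weak-$L^1$ maximal estimate that passes to the limit $k\to\infty$. Finally, for the ergodic conclusion: subadditivity yields $Q([s,s+t)) \geq Q([0,s+t)) - Q([0,s))$, so dividing by $t$ and sending $t\to\infty$ gives $a\circ\sigma_s \geq a$ almost surely, and since stationarity makes $a$ and $a\circ\sigma_s$ equidistributed we obtain equality for every $s \geq 0$; ergodicity of $\{\sigma_t\}_{t\ge 0}$ then forces $a$ to be almost surely constant.
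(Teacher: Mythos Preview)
The paper does not prove this proposition at all; it simply cites Akcoglu and Krengel~\cite{AK}. Your sketch is essentially an outline of that cited argument --- discrete Kingman along integer times followed by interpolation controlled by a maximal function --- and you yourself defer the hard step (integrability of $M$) to~\cite{AK}, so there is nothing substantive to compare.

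One small slip worth fixing: Markov's inequality gives only $\Prob[M>\epsilon n]\le \E[M]/(\epsilon n)$, whose sum over $n$ diverges, so it does not combine with Borel--Cantelli as you wrote. What you actually need is the tail-sum characterization of integrability,
\[
\sum_{n\ge 1}\Prob[M>\epsilon n]\ \le\ \epsilon^{-1}\int_0^\infty \Prob[M>t]\,dt\ =\ \epsilon^{-1}\,\E[M]\ <\ \infty,
\]
which then does yield $n^{-1}M(\sigma_n\omega)\to 0$ almost surely via Borel--Cantelli and stationarity.
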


\subsection{The precise hypotheses}
We now state the hypotheses and assumptions in our main results. We are given a probability space $(\Omega,\mathcal F, \Prob)$ and suppose that
\begin{equation} \label{erghyp}
\tau_y : \Omega \to \Omega \quad \mbox{is an ergodic group of measure-preserving transformations}
\end{equation}

The Hamiltonian $H=H(p,y,\omega)$ is a function $H: \Rd \times\Rd \times \Omega \rightarrow \R$ which is assumed to be
{\it stationary} in $(y,\omega)$ for each $p\in\Rd$, {\it convex} with respect to $p$ for each $(y,\omega)$, and for each $\omega$, {\it coercive} in $p$ uniformly in $y$ and {\it uniformly bounded and equicontinuous} locally in $p$ and uniformly in $y$. To be more explicit, we assume that:
\begin{equation} \label{Hstat}
\text{for each $p\in\Rd$}, \ \ (y,\omega) \mapsto H(p,y,\omega) \ \ \mbox{is stationary},
\end{equation}
\begin{equation} \label{Hconv}
\text{ for every $y \in \Rd$ and $\omega \in \Omega$}, \ \ p \mapsto H(p,y,\omega) \ \ \mbox{is convex},
\end{equation}
\begin{equation} \label{Hcoe}
\lim_{|p|\to \infty} \inf_{y\in \Rd} H(p,y,\omega) = +\infty \ \ \text{for every $\omega\in \Omega$,}
\end{equation}
and for every $R > 0$,
\begin{equation} \label{Hec}
 \big\{ H(\cdot,\cdot,\omega) : \omega \in\Omega \big\} \ \ \mbox{is uniformly bounded and equicontinuous on} \  B_R \times \Rd.
\end{equation}
Notice that the conditions imposed on the Hamiltonian~$H$ are taken to hold for \emph{every} $\omega \in \Omega$, rather than merely almost surely in $\omega$. This is because we lose no generality by initially removing an event of probability zero.

\subsection{Absolute minimizers}

We recall now the notion of absolute minimizers. The motivation was explained in the introduction. Following \cite{ACJS}, we split the definition into two halves and state it for the Hamiltonian $H=H(p,x)$. 
\begin{definition}
A function $u\in C^{0,1}_{\mathrm{loc}}(U)$ is called \emph{absolutely subminimizing} in $U$ for $H$ if, for every $V\ll U$ and every $v\in C^{0,1}(V)$ such that $v \leq u$ in $V$ and $v=u$ on $\partial V$,
\begin{equation} \label{AMdefc}
\esssup_{x\in V} H(Du(x),x) \leq \esssup_{x\in V} H(Dv(x),x).
\end{equation}
Likewise, $u$ is called \emph{absolutely superminimizing} for $H$ in $U$ if \eqref{AMdefc} holds provided that $V\ll U$ and $v\in C^{0,1} (V)$ is such that  $v \geq u$ in $V$ and $v=u$ on $\partial V$. Finally, $u$ is called \emph{absolutely minimizing} if it is both absolutely subminimizing and absolutely superminimizing.
\end{definition}

\subsection{Some useful results}

We state some preliminary lemmas needed in some arguments in the sequel. Several times we will use the following well-known consequence of convexity for first-order equations (see \cite{Ba}).
\begin{lem} \label{convtrick}
(i) Suppose that $u\in\USC(U)$ is a viscosity subsolution of $H(Du,x) \leq 0$ in $U$ with $H$ coercive in $p$, i.e., satisfying \eqref{Hcoer}. Then $u\in C^{0,1}(U)$ with a Lipschitz constant depending on the rate of coercivity of $H$.
(ii) Suppose that $u \in C^{0,1}_{\mathrm{loc}}(U)$ with $H$ convex in $p$. Then $u$ satisfies the inequality $H(Du,x) \leq 0$ in $U$ in the viscosity sense if and only if it satisfies the inequality a.e. in $U$.
\end{lem}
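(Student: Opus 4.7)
The plan is to treat the two parts separately; the only real subtlety is the reverse direction of (ii). For (i), coercivity \eqref{Hcoe} supplies $R = R(H) > 0$ such that $H(p,x) > 0$ whenever $|p| \geq R$. If $\varphi \in C^1$ touches $u$ from above at $x_0 \in U$, then the subsolution property gives $H(D\varphi(x_0),x_0) \leq 0$ and hence $|D\varphi(x_0)| \leq R$. A standard doubling variable argument then promotes this pointwise gradient bound to the Lipschitz estimate $|u(x) - u(y)| \leq R |x-y|$: supposing it fails, for suitable $\delta,\eta > 0$ the function
\begin{equation*}
(x,y) \mapsto u(x) - u(y) - (R+\delta)|x-y| - \eta \bigl( |x|^2 + |y|^2 \bigr)
\end{equation*}
attains a positive maximum at an interior pair $(x_0,y_0)$ with $x_0 \neq y_0$, and the smooth test function $x \mapsto (R+\delta)|x-y_0| + \eta|x|^2 + c$ touches $u$ from above at $x_0$; evaluating the subsolution inequality at this test function gives $R + \delta - 2\eta|x_0| \leq R$, a contradiction once $\eta$ is small. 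Sending $\delta \downarrow 0$ produces the claimed Lipschitz bound.

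For the forward direction of (ii), Rademacher's theorem makes $u$ differentiable almost everywhere on $U$. At any point $x_0$ of classical differentiability, the function $\varphi(x) := u(x_0) + Du(x_0) \cdot (x-x_0) + |x-x_0|^{3/2}$ touches $u$ from above at $x_0$, so the viscosity subsolution condition directly yields $H(Du(x_0),x_0) \leq 0$.

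For the reverse direction---the main point, and the place where convexity is essential---fix any $V \ll U$ and $\ep < \mathrm{dist}(V,\partial U)$, and set $u^\ep := u \ast \rho_\ep$ for a standard mollifier $\rho_\ep$. Then $Du^\ep(x) = \int \rho_\ep(z) Du(x-z)\,dz$, and convexity \eqref{Hconv} of $p \mapsto H(p,x)$ together with Jensen's inequality gives
\begin{equation*}
H\bigl( Du^\ep(x), x \bigr) \leq \int \rho_\ep(z) \, H\bigl( Du(x-z), x \bigr) \, dz.
\end{equation*}
Since $H(Du(x-z), x-z) \leq 0$ almost everywhere by hypothesis, and since the uniform Lipschitz bound on $u$ over $\{x : \mathrm{dist}(x,V) < \ep\}$ combined with the equicontinuity \eqref{Hec} of $H$ in $x$ (uniform on bounded $p$-sets) lets us replace $x-z$ by $x$ up to a modulus $\omega_R(\ep) \to 0$, we conclude that $u^\ep$ is a classical, and hence viscosity, subsolution of $H(Du^\ep,\cdot) \leq \omega_R(\ep)$ on $V$. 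Because $u^\ep \to u$ uniformly on $V$, the standard stability theorem for viscosity solutions \cite{CIL} passes to the limit and yields $H(Du,x) \leq 0$ in the viscosity sense on $U$.
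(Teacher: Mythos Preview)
The paper does not give its own proof of this lemma; it is stated as a well-known fact with a reference to Barles~\cite{Ba}. Your arguments for part~(i) and for the reverse implication in part~(ii) --- the mollification-and-Jensen step you correctly flag as the main point --- are the standard ones and are fine, modulo a small informality in~(i) about why the penalized maximum is attained at an interior pair (localize first to some $V\ll U$, or restrict to a compact piece of $U$).

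There is, however, a genuine gap in the forward direction of~(ii). Differentiability of $u$ at $x_0$ yields only $u(x)=u(x_0)+Du(x_0)\cdot(x-x_0)+o(|x-x_0|)$, and this $o(|x-x_0|)$ remainder need not be dominated by $|x-x_0|^{3/2}$. In one dimension, take $u(x):=x^{5/4}$ for $x\geq 0$ and $u(x):=0$ for $x<0$; then $u$ is locally Lipschitz with $u'(0)=0$, yet $u(x)-|x|^{3/2}=x^{5/4}(1-x^{1/4})>0$ for small $x>0$, so your $\varphi$ does \emph{not} touch $u$ from above at the origin. The standard repair is to penalize quadratically and pass to a limit: for each $k\in\N$ the map $x\mapsto u(x)-Du(x_0)\cdot(x-x_0)-k|x-x_0|^2$ attains, on a small closed ball about $x_0$, a maximum at some $x_k$; testing there gives $H\bigl(Du(x_0)+2k(x_k-x_0),\,x_k\bigr)\leq 0$, while comparing $k|x_k-x_0|^2\leq u(x_k)-u(x_0)-Du(x_0)\cdot(x_k-x_0)=o(|x_k-x_0|)$ forces $k|x_k-x_0|\to 0$, and continuity of $H$ then yields $H(Du(x_0),x_0)\leq 0$.
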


The ergodic theorem implies that a function with a stationary, mean zero gradient is strictly sublinear at infinity. This is summarized in the following lemma, which is due to Kozlov \cite{K} (a proof can also be found in the appendix of \cite{ASo}).
\begin{lem} \label{koz}
Suppose that $w:\Rd\times\Omega\to \R$ and $\Phi = Dw$ in the sense of distributions, a.s. in $\omega$. Assume $\Phi$ is stationary, $\E \Phi(0,\cdot) = 0$, and $\Phi(0,\cdot) \in L^\alpha(\Omega)$ for some $\alpha > \d$. Then
\begin{equation}  \label{sublininftyA}
\lim_{|y|\to\infty} |y|^{-1}w(y,\omega) = 0 \quad \mbox{a.s. in} \ \omega.
\end{equation}
\end{lem}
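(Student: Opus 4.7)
The plan is to rescale and run a homogenization-style compactness argument, driven by the spatial ergodic theorem and Morrey's embedding (which is available since $\alpha > \d$). Define
\[ w^\ep(y,\omega) := \ep\bigl(w(y/\ep,\omega)-w(0,\omega)\bigr), \]
so that $w^\ep(0,\omega)=0$ and $\grad w^\ep(y,\omega) = \Phi(y/\ep,\omega)$ in the sense of distributions. The substitution $\ep = 1/|y|$ shows that the desired conclusion \eqref{sublininftyA} is equivalent to the statement that, a.s.\ in $\omega$,
\[ \lim_{\ep \to 0}\, \sup_{z \in B_R} \abs{w^\ep(z,\omega)} = 0 \qquad \text{for every } R > 0. \]

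The first step is to bound $\grad w^\ep$ in $L^\alpha(B_R)$ uniformly in $\ep$. Applying the multidimensional Birkhoff theorem to the stationary $L^1(\Omega)$ process $\abs{\Phi(\cdot,\cdot)}^\alpha$ gives, a.s.\ in $\omega$,
\[ \int_{B_R}\abs{\grad w^\ep(y,\omega)}^\alpha \, dy \;=\; \ep^n \int_{B_{R/\ep}}\abs{\Phi(z,\omega)}^\alpha \, dz \;\longrightarrow\; \abs{B_R}\,\E \abs{\Phi(0,\cdot)}^\alpha \quad \text{as } \ep \to 0. \]
Since $\alpha > \d$, Morrey's embedding combined with the normalization $w^\ep(0,\omega)=0$ produces a uniform bound on $w^\ep$ in $C^{0,1-\d/\alpha}(B_R)$. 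By Arzel\`a--Ascoli, the family $\{ w^\ep(\cdot,\omega)\}_\ep$ is then precompact in $C_{\mathrm{loc}}(\Rd)$.

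The second step is to identify every subsequential limit as zero. Fix a limit $\bar w$ along some $\ep_k \to 0$; then $\bar w(0) = 0$. A second application of the spatial ergodic theorem, now to the stationary \emph{mean-zero} process $\Phi$ itself, yields on a set of full probability
\[ \frac{1}{\abs{B_r(y)}}\int_{B_r(y)} \grad w^\ep(z,\omega) \, dz \;=\; \frac{1}{\abs{B_{r/\ep}(y/\ep)}}\int_{B_{r/\ep}(y/\ep)} \Phi \;\longrightarrow\; 0 \]
for every $y \in \Rd$ and $r > 0$. Combined with the uniform $L^\alpha$ bound from the first step, this upgrades to $\grad w^\ep \rightharpoonup 0$ weakly in $L^\alpha_{\mathrm{loc}}(\Rd)$. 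Passing to the limit along $\ep_k$ in the distributional identity $\int w^\ep \grad \phi = -\int (\grad w^\ep)\phi$ for $\phi \in C_c^\infty$ gives $\grad \bar w \equiv 0$, so $\bar w$ is constant, and $\bar w(0) = 0$ forces $\bar w \equiv 0$. Uniqueness of the subsequential limit then gives $w^\ep \to 0$ locally uniformly on $\Rd$, which is exactly \eqref{sublininftyA}.

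The one technical point that will require care is arranging a single $\Prob$-null event outside of which the ergodic convergences for both $\abs{\Phi}^\alpha$ and $\Phi$ hold \emph{simultaneously} over all balls $B_r(y)$, rather than one ball at a time; this is standard and can be handled by invoking Birkhoff on a countable dense collection of rational centers and radii and then using the uniform $L^\alpha$ bound on $\grad w^\ep$ to pass to general test functions.
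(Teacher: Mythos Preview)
The paper does not supply its own proof of this lemma: it attributes the statement to Kozlov~\cite{K} and points to the appendix of~\cite{ASo} for a written argument. So there is no in-paper proof to compare against.

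Your proposal is the standard route and is correct. The only place to be slightly more careful than your sketch suggests is the justification of $\nabla w^\ep \rightharpoonup 0$. After rescaling, the ball $B_r(y)$ becomes the dilation $\ep^{-1}B_r(y) = B_{r/\ep}(y/\ep)$, whose center escapes to infinity when $y\neq 0$; the most naive form of the Wiener--Birkhoff theorem (averages over origin-centered balls $B_R(0)$ as $R\to\infty$) does not literally cover that case. One fixes this either by invoking the pointwise ergodic theorem for dilations $tA$ of a fixed bounded set $A\subseteq\Rd$ (which is standard), or by testing instead against indicators of rational boxes and reducing via inclusion--exclusion to boxes with one vertex at the origin, where ordinary Birkhoff applies directly. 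With that cosmetic adjustment, your Morrey-compactness-plus-identification-of-the-limit scheme is exactly the argument one finds in the references the paper cites.
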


The following very simple measure theoretic lemma is cited in the proof of Proposition~\ref{mainstep}. A proof can be found in \cite[Lemma 1]{LS} or the appendix of \cite{ASo}.

\begin{lem} \label{meastheor}
Suppose that $(X,\mathcal{G}, \mu)$ is a finite measure space, and $\{ f_\ep \}_{\ep > 0} \subseteq L^1(X,\mu)$ is a family of $L^1(X,\mu)$ functions such that $\liminf_{\ep \to 0} f_\ep \in L^1(X,\mu)$, and
\begin{equation} \label{mtass}
f_\ep \rightharpoonup \liminf_{\ep \to 0} f_\ep \quad \mbox{weakly in} \ L^1(X,\mu).
\end{equation}
Then
\begin{equation*}
f_\ep \rightarrow \liminf_{\ep \to 0} f_\ep \quad \mbox{in} \ L^1(X,\mu).
\end{equation*}
In particular, $f_\ep \rightarrow \liminf_{\ep \to 0} f_\ep$ in measure.
\end{lem}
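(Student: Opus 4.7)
The plan is to combine the almost-everywhere inequality $\liminf_\ep f_\ep \geq f$ (where $f := \liminf_{\ep \to 0} f_\ep$) with the uniform integrability automatically provided by weak $L^1$ convergence, and then use the weak convergence itself applied to the constant test function $1 \in L^\infty(X,\mu)$ to handle the opposite one-sided piece.

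To set things up, write $f := \liminf_{\ep \to 0} f_\ep$ and consider the one-sided pieces $(f - f_\ep)_+$ and $(f - f_\ep)_-$, so that $|f - f_\ep| = (f - f_\ep)_+ + (f - f_\ep)_-$ and $f - f_\ep = (f - f_\ep)_+ - (f - f_\ep)_-$. Pointwise almost everywhere, $\liminf_\ep f_\ep(x) = f(x)$ gives $\limsup_\ep (f(x) - f_\ep(x)) \leq 0$, so
\[
(f - f_\ep)_+ \longrightarrow 0 \quad \mu\text{-a.e.}
\]
This is the easy half and does not use the weak convergence.

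Next I would invoke the Dunford--Pettis theorem: weak convergence of $\{f_\ep\}$ in $L^1(X,\mu)$ implies that $\{f_\ep\}$ is uniformly integrable, i.e.\ $\sup_\ep \int_E |f_\ep|\,d\mu \to 0$ as $\mu(E) \to 0$. Since $(f - f_\ep)_+ \leq |f| + |f_\ep|$ and $f \in L^1$, the family $\{(f - f_\ep)_+\}_\ep$ is also uniformly integrable. As $\mu$ is finite and $(f - f_\ep)_+ \to 0$ a.e., Vitali's convergence theorem yields
\[
\int_X (f - f_\ep)_+ \, d\mu \longrightarrow 0.
\]

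Finally, to control the other half, test weak convergence against the function $1 \in L^\infty(X,\mu)$ (allowed because $\mu$ is finite). This gives $\int_X (f - f_\ep)\, d\mu \to 0$, which combined with the previous display forces $\int_X (f - f_\ep)_-\, d\mu \to 0$ as well. Adding the two one-sided integrals yields $\|f - f_\ep\|_{L^1} \to 0$, and $L^1$ convergence implies convergence in measure, completing the proof.

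The only real substantive ingredient is the Dunford--Pettis characterization of weak $L^1$ compactness; once that is in hand the argument is a routine Vitali-plus-sign-chase. There is no genuine obstacle: the pointwise a.e.\ convergence $(f - f_\ep)_+ \to 0$ is immediate from the definition of $\liminf$, and the uniform integrability does the rest.
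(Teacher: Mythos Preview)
The paper does not supply its own proof of this lemma; it simply cites \cite[Lemma~1]{LS} and the appendix of \cite{ASo}. Your argument is correct and self-contained: the pointwise vanishing of $(f-f_\ep)_+$ from the definition of $\liminf$, Dunford--Pettis to obtain uniform integrability from weak $L^1$ convergence, Vitali to pass to $\int (f-f_\ep)_+\,d\mu\to 0$, and testing the weak convergence against $1$ to force $\int (f-f_\ep)_-\,d\mu\to 0$ all go through as you describe. One small remark: since the family is indexed by a continuous parameter $\ep>0$, Dunford--Pettis and Vitali are most cleanly applied along an arbitrary sequence $\ep_n\to 0$; as $L^1$ convergence is a metric notion, this suffices.
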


\section{The distance functions}\label{amdf}

We study the eikonal equation and describe the relationship of its solutions to absolute minimizers, introduce the effective Hamiltonian and discuss some of its properties, and present the result about the homogenization of the distance functions. Some key intermediate results are postponed to later in the paper.

\subsection{Distance functions for $H=H(p)$.} \label{babydist}
We begin our presentation with a review of the connection between distance functions and absolute minimizers in the case that $H$ is independent of $(y,\omega)$. Most of what we say here can be found in more detail in~\cite{ACJS} or \cite{GWY}.
We assume only that
\begin{equation}\label{hj2}
H:\Rd\to\R \ \text{ is convex, continuous and coercive}.
\end{equation}

This hypothesis ensures that the sublevel set $H^{-1}(\mu):=\{ q: H(q) \leq \mu \}$ is bounded for every $\mu\in \R$. The \emph{distance functions} for $H$ (called the \emph{cone functions} in~\cite{ACJS}) are defined, for every $\mu \geq \min_{\Rd}H$, by
\begin{equation} \label{disfun-ponly}
d_\mu(y) : = \max \big\{ p\cdot y \, : \, p \in H^{-1}(\mu) \} = \max\big\{ p\cdot y\, : \, H(p) \leq \mu \big\}.
\end{equation}
Select any $p_* \in \Rd$ so that $H(p_*) = \min_{\Rd} H$. It is clear that $d_\mu(y) \geq p_* \cdot y$ with equality holding only if $\mu = H(p_*)$ or $y = 0$.

It is not difficult to check that, in the viscosity sense,
\begin{equation} \label{eik}
H(Dd_\mu) = \mu \quad \mbox{in} \ \Rd \setminus \{ 0 \}.
\end{equation}
Indeed, that $d_\mu$ is a subsolution of \eqref{eik} is obvious, even in the whole space $\Rd$, since it is the maximum of a family of global subsolutions. To see that $d_\mu$ is a supersolution, assume that a smooth function $\varphi$ touches $d_\mu$ from below at a point $x_0 \neq 0$.

It then follows from the convexity of $d_\mu$, that the plane $y\mapsto D\varphi(x_0) \cdot y$ touches $d_\mu$ from below at $x_0$ as well. If we have $H(D\varphi(x_0)) < \mu$, then the continuity of $H$ yields  $H(D\varphi(x_0)+\ep x_0) \leq \mu$ for some small enough $\ep >0$, and we derive the contradiction
\begin{equation*}
d_\mu(x_0) \geq (D\varphi(x_0)+\ep x_0) \cdot x_0 > D\varphi(x_0) \cdot x_0 = d_\mu(x_0).
\end{equation*}
Thus \eqref{eik} holds in the viscosity sense for all $\mu \geq H(p_*)$.

As we prove in more generality in Section~\ref{MP}, for every $p \in \Rd$ and $\mu > H(p)$, the eikonal equation
\begin{equation} \label{Eeq}
H(p+Du) = \mu
\end{equation}
possesses a unique solution $u$ in $\Rd\setminus \{ 0 \}$ subject to
\begin{equation*}
\liminf_{|y|\to\infty} |y|^{-1} u(y) > 0 \quad \mbox{and} \quad u(0)=0.
\end{equation*}
It follows immediately from the discussion above that this solution is given by the formula $u(y) = d_\mu(y) - p\cdot y$. In other words, the distance functions $d_\mu$ give all such solutions of \eqref{Eeq} for $\mu > H(p)$.

What is more interesting (and useful) is that distance functions actually characterize absolute (sub/super)minimizers of $H=H(p)$. To see this, it is necessary to introduce the notion of {\it comparison with distance functions}.

\begin{definition} \label{CCnaive}
A bounded  $u: U\to \R$  satisfies comparisons with distance functions from above (with respect to $H$ in $U$), if
\begin{equation*}
\max_{x\in \overline V} \big(u(x) - d_\mu(x-x_0) \big) = \max_{x\in \partial V} \big( u(x) - d_\mu(x-x_0) \big)
\end{equation*}
provided that
\begin{equation}\label{cdfc}
\mu > \min H, \ \ V \ll U \quad \mbox{and} \quad x_0 \in \Rd \setminus V.
\end{equation}
Likewise, $u$ satisfies \emph{comparisons with distance functions from below} if \eqref{cdfc} implies that
\begin{equation*}
\min_{x\in \overline V} \big(u(x) + d_\mu(x_0-x) \big) = \min_{x\in \partial V} \big( u(x) + d_\mu(x_0-x) \big).
\end{equation*}
\end{definition}

The connection between between absolute minimizers and distance functions is summarized in the following result.

\begin{prop}[{\cite[Theorem 4.8]{ACJS}}] \label{CC}
Suppose that $u:U\to \R$ is bounded. Then $u$ is an absolute subminizer (superminimizer) for $H$ in $U$ if and only if $u$ satisfies comparisons with distance functions from above (below) with respect to $H$ in $U$.
\end{prop}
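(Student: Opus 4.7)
The proof splits naturally into two implications, which I would handle separately. For the forward direction (absolute subminimizer $\Rightarrow$ comparison with distance functions from above), I argue by contradiction with an explicit competitor. Suppose the comparison fails in some $V \ll U$ for parameters $\mu > \min H$ and $x_0 \in \R^n \setminus V$, and set $c_0 := \max_{\partial V}(u(x) - d_\mu(x - x_0))$. Consider the nonempty open set $W := \{x \in V : u(x) > d_\mu(x - x_0) + c_0\}$; by continuity of $u$ (which follows from $u \in C^{0,1}_{\mathrm{loc}}(U)$), we have $u = d_\mu(\cdot - x_0) + c_0$ on $\partial W$ and $W \ll U$. Then the function $v := d_\mu(\cdot - x_0) + c_0$ is an admissible lower competitor on $W$, and since $x_0 \notin V \supset W$, the discussion following~\eqref{eik} gives $\esssup_W H(Dv) = \mu$. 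The absolutely subminimizing property of $u$ then forces $\esssup_W H(Du) \leq \mu$, which via Lemma~\ref{convtrick}(ii) makes $u$ a viscosity subsolution of $H(Dw) \leq \mu$ in $W$. Since $v$ is a viscosity solution of the same equation in $W$ and $u = v$ on $\partial W$, the standard comparison principle for the coercive eikonal equation in bounded domains yields $u \leq v$ in $W$, contradicting the very definition of $W$.

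For the reverse direction, suppose $u$ satisfies comparisons with distance functions from above. I would first observe that applying this property with $\mu$ large and $x_0$ just outside small balls sandwiches $u$ between cones of controlled slope, giving $u \in C^{0,1}_{\mathrm{loc}}(U)$. To show absolute subminimizing, fix $V \ll U$ and a competitor $v \in C^{0,1}(V)$ with $v \leq u$ in $V$, $v = u$ on $\partial V$, and set $M := \esssup_V H(Dv)$; by Lemma~\ref{convtrick}(ii) it suffices to prove that $u$ is itself a viscosity subsolution of $H(Dw) \leq M$ in $V$. The plan is to introduce the maximal viscosity subsolution $\tilde v$ of $H(Dw) \leq M$ in $V$ with boundary value $u|_{\partial V}$, which for convex $V$ admits the representation $\tilde v(x) = \inf_{y \in \partial V}(u(y) + d_M(x - y))$ and an analogous optical-path representation in the general case. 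The comparison-with-distance-functions property, applied to $u$ against translated cones $d_\mu(\cdot - z_0)$ with $z_0 \in \R^n \setminus V$ approaching each $y \in \partial V$ from outside and $\mu$ slightly above $M$, should yield $u(x) \leq u(y) + d_M(x - y) + o(1)$; taking the infimum over $y \in \partial V$ and passing to the limit $\mu \to M^+$ then gives $u \leq \tilde v$ in $V$. Since $\tilde v$ matches $u$ on $\partial V$ and is a viscosity subsolution of $H(Dw) \leq M$, and $v \leq u \leq \tilde v$ in $V$, a direct verification using stability under semilimits would upgrade $u$ itself to a viscosity subsolution of $H(Dw) \leq M$.

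The main obstacle lies in the reverse direction. Specifically, rigorously justifying the inequality $u(x) \leq u(y) + d_M(x - y)$ for every $y \in \partial V$ requires a careful exchange of the limits $z_0 \to y$ and $\mu \to M^+$, together with control of the maxima on $\partial V$ of the translated comparisons (which need not be attained at the projection of $z_0$ to $\partial V$). A secondary subtlety arises in the degenerate case $M = \min H$, in which $\{H \leq M\}$ may have empty interior and the comparison principle for the eikonal equation fails; this case demands a separate rigidity argument exploiting comparisons with distance functions at $\mu$ slightly above $\min H$ combined with the local Lipschitz estimate on $u$.
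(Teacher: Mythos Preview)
The paper does not prove Proposition~\ref{CC}; it is quoted from \cite[Theorem~4.8]{ACJS}. What the paper \emph{does} prove is the forward implication of the spatially-dependent generalization (Proposition~\ref{CCprop}), and your forward argument is essentially that proof specialized to $H=H(p)$: pass to $W=\{u>d_\mu(\cdot-x_0)+c_0\}$, use the competitor $d_\mu(\cdot-x_0)+c_0$ and the subminimizing property to force $H(Du)\le\mu$ in $W$, then compare. The one difference is the closing step: the paper glues $u$ into a global subsolution of $H(Dw)\le\mu$ in $\Rd\setminus\{x_0\}$ and invokes the exterior-domain comparison Proposition~\ref{metcomp}, whereas you appeal to bounded-domain comparison for the eikonal equation. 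The latter is not completely automatic (sub- and supersolutions of $H(Dw)=\mu$ need not compare without strictness), but it is easily repaired via convexity and $\mu>\min H$ by strictifying with $\lambda u+(1-\lambda)\,p_*\cdot x$, $H(p_*)=\min H$, and sending $\lambda\to1$.

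Your reverse direction, however, has a genuine gap, and it is not the one you flagged. Even granting $u\le\tilde v$ in $V$, the sandwich $v\le u\le\tilde v$ between two subsolutions of $H(Dw)\le M$ does not make $u$ a subsolution, and ``stability under semilimits'' is irrelevant since nothing is converging to $u$. (On $(-1,1)$ with $H(p)=|p|$ and $M=1$, any continuous $u$ vanishing at $\pm1$ with $|u|\le 1-|x|$ lies between the subsolutions $\pm(1-|x|)$, yet need not be $1$-Lipschitz.) The argument in \cite{ACJS} for sufficiency is structurally different: it extracts from cone comparison a \emph{pointwise} constraint on $Du$ at each differentiability point, and then ties this local quantity to the competitor through the boundary matching. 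Your outline never produces such a pointwise constraint on $Du$, which is why the final step cannot close. Your step toward $u\le\tilde v$ is also shakier than you indicate: cone comparison with vertex $z_0\to y\in\partial V$ only yields $u(x)\le d_\mu(x-y)+\max_{y'\in\partial V}\big(u(y')-d_\mu(y'-y)\big)$, and that maximum need not equal $u(y)$.
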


The hypotheses of~\cite{ACJS} include that the level sets of $\barH$ have empty interior. However, as pointed out in the introduction of \cite{ACJS}, this assumption is needed only in the proof of \cite[Lemma~5.1]{ACJS}, which is independent of \cite[Theorem 4.8]{ACJS}.

The observation behind Proposition~\ref{CC} goes back to Evans, Crandall and Gariepy~\cite{CEG} who discovered it in the case $H(p)=|p|^2$, i.e., in the context of infinity subharmonic functions. It was subsequently generalized to $H=H(p) \in C^2$ in~\cite{GWY}, the regularity assumption being finally removed in \cite{ACJS}. Since we need to apply it to $\barH$, the regularity of which is we know nothing about, this generality is essential to our approach.

\subsection{Distance functions for $H=H(p,y)$}
\label{bigdist}
Building on a connection to global subsolutions of the eikonal equation discovered in \cite{ASo}, we define distance functions as the unique solutions of the eikonal equation with specified growth at infinity. We consider $H\in C(\Rd\times\Rd)$ that is convex in $p$, that is, for each $x\in \Rd$,
\begin{equation} \label{Hconvex}
p\mapsto H(p,x) \quad \mbox{is convex,}
\end{equation}
and coercive, i.e.,
\begin{equation} \label{Hcoer}
\lim_{|p| \to \infty} \inf_{x\in \Rd} H(p,x) = +\infty,
\end{equation}
and regular in the sense that, for each $R> 0$,
\begin{equation} \label{Huc}
H \quad \mbox{is uniformly continuous and bounded on} \ \ B_R \times \Rd.
\end{equation}

Notice that a constant function is a global subsolution of the eikonal equation
\begin{equation} \label{EeqX}
H(p+Du,y) = \mu
\end{equation}
for $\mu = \sup_{y\in \Rd} H(p,y)$, and this quantity is finite by \eqref{Huc}. Therefore, we may define
\begin{multline} \label{ssass}
\overline H(p): = \inf \Big\{ \mu \in \R \, : \, \mbox{there exists a global subsolution} \ \ w\in C^{0, 1}(\Rd) \ \ \mbox{of} \ \eqref{EeqX} \\
\mbox{satisfying} \ \ \lim_{|y| \to \infty} |y|^{-1} w(y) = 0 \Big\}.
\end{multline}
It is clear from Lemma~\ref{convtrick} that
\begin{equation} \label{infsup}
\overline H(p)=\inf_{\phi\in {\mathcal S}}\esssup_{y\in\Rd}{H(p+D\phi(y),y)},
\end{equation}
where we define
\begin{equation*}
{\mathcal S}:=\{w \in C^{0,1}(\Rd): \lim_{|y| \to \infty} |y|^{-1} w(y) = 0 \}.
\end{equation*}
In some of the arguments below it is helpful to keep in mind that, in view of Lemma~\ref{convtrick}, the notion of subsolution in \eqref{ssass} may be interpreted either in the viscosity or the almost everywhere senses as the two are equivalent.

The effective nonlinearity $\overline H$ inherits the properties of convexity, coercivity and continuity from $H$, as we show in the next lemma. In particular, $\overline H$ possesses its own set of distance functions, as defined in the previous subsection, which we denote by $\overline d_\mu$.

\begin{lem} \label{Hbarprop}
$\overline H$ is convex, continuous and coercive.
\end{lem}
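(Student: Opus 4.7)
The plan is to verify the three properties directly from the definition \eqref{ssass}, using the characterization \eqref{infsup} as needed. First I would establish that $\overline H$ takes real (finite) values: for the upper bound, the constant function $w\equiv 0$ lies in $\mathcal{S}$ and gives $\overline H(p)\le \sup_{y\in\Rd}H(p,y)$, which is finite by \eqref{Huc}; for the lower bound, note that \eqref{Hcoer} and \eqref{Huc} together force $H$ to be bounded below on $\Rd\times\Rd$, which yields a uniform lower bound on $\overline H$.

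For convexity, given $p_1,p_2\in\Rd$ and $\mu_i>\overline H(p_i)$, pick $w_i\in\mathcal{S}$ with $H(p_i+Dw_i,y)\le\mu_i$ a.e.\ in $\Rd$. For $\lambda\in[0,1]$ set $p:=\lambda p_1+(1-\lambda)p_2$ and $w:=\lambda w_1+(1-\lambda)w_2$. Clearly $w\in\mathcal{S}$, since sublinearity at infinity is preserved by convex combinations, and \eqref{Hconvex} gives
\begin{equation*}
H(p+Dw(y),y)\le \lambda H(p_1+Dw_1(y),y)+(1-\lambda)H(p_2+Dw_2(y),y)\le \lambda\mu_1+(1-\lambda)\mu_2
\end{equation*}
a.e., so $\overline H(p)\le\lambda\mu_1+(1-\lambda)\mu_2$; taking infima over $\mu_1,\mu_2$ yields convexity. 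Continuity of $\overline H$ then follows for free, since a convex function that is finite everywhere on $\Rd$ is automatically locally Lipschitz (hence continuous).

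The main work is coercivity, which is where the sublinearity condition in the definition of $\mathcal{S}$ gets used. Suppose $\overline H(p)\le \mu$; pick $w\in\mathcal{S}$ (by the usual approximation) with $H(p+Dw(y),y)\le \mu+1$ a.e. By \eqref{Hcoer} there exists $M=M(\mu)$ with $|p+Dw(y)|\le M$ a.e., and in particular $w$ is Lipschitz. Averaging over $B_R$ and applying the divergence theorem,
\begin{equation*}
\left|p+\frac{1}{|B_R|}\int_{B_R}Dw(y)\,dy\right|=\left|p+\frac{1}{|B_R|}\int_{\partial B_R}w(y)\nu(y)\,d\sigma(y)\right|\le M.
\end{equation*}
The boundary term is bounded by $\frac{n}{R}\sup_{|y|=R}|w(y)|$, which tends to $0$ as $R\to\infty$ because $w\in\mathcal{S}$. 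Letting $R\to\infty$ gives $|p|\le M(\mu)$, so each sublevel set $\{\overline H\le\mu\}$ is bounded, proving coercivity.

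The main obstacle is precisely the coercivity step: neither convexity nor the PDE inequality alone force any control on $|p|$, so the argument genuinely exploits the sublinear-at-infinity membership $w\in\mathcal{S}$ via an averaging/integration-by-parts identity. This is the structural reason for requiring the growth restriction in \eqref{ssass}; without it, constants would trivially make $\overline H\equiv\inf_q\sup_yH(q,y)$, and the effective Hamiltonian would collapse.
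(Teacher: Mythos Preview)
Your proof is correct. The convexity and continuity arguments are essentially what the paper has in mind (the paper only says these follow ``easily'' from \eqref{Hconvex} and \eqref{Huc}, and your derivation of continuity from convexity plus finiteness is a clean way to fill that in).

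The coercivity step is where you and the paper genuinely diverge. The paper argues in the viscosity spirit: any $w\in\mathcal{S}$ is touched from above somewhere by $\ep(1+|y|^2)^{1/2}$, and evaluating the subsolution inequality at such a touching point (where the test function has gradient of size $\leq\ep$) yields, via \eqref{Huc}, the pointwise lower bound $\overline H(p)\geq\inf_{y}H(p,y)$; coercivity then comes directly from \eqref{Hcoer}. Your argument instead bounds $|p+Dw|$ a.e.\ by coercivity of $H$, then averages over $B_R$ and uses the divergence theorem together with sublinearity of $w$ to kill the boundary term, concluding $|p|\leq M(\mu)$. Both arguments exploit the sublinearity constraint in $\mathcal{S}$, but in different ways: the paper uses it to produce a touching point, you use it to make a boundary integral vanish. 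The paper's route has the advantage of producing the explicit two-sided bound $\inf_y H(p,y)\leq\overline H(p)\leq\sup_y H(p,y)$, which is reused later in \eqref{crude2side}; your route is perhaps more elementary but does not yield that lower bound as a by-product.

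One small slip in your closing commentary: it is \emph{linear} functions (not constants) that would collapse $\overline H$ if the sublinearity restriction were dropped; constants already lie in $\mathcal{S}$.
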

\begin{proof}
Any function which is strictly sublinear at infinity is touched from above at some point of $\Rd$ by the function $\ep (1+|y|^2)^{1/2}$, for any $\ep > 0$. From this and \eqref{Huc} it follows that $\overline H(p) \geq \inf_{y\in \Rd} H(p,y)$. Therefore $\overline H(p)$ is finite and, by \eqref{Hcoer}, $\overline H(p) \rightarrow +\infty$ as $|p| \to \infty$. The continuity of $\overline H$ is easy to obtain from \eqref{Huc}, and the convexity of $\overline H$ from \eqref{Hconvex}.
\end{proof}

Using ideas from \cite{ASo} we show that, for every $p\in \Rd$, $\mu > \overline H(p)$ and $x_0\in \Rd$, the eikonal equation \eqref{EeqX} possesses a unique solution in the punctured space $\Rd\setminus \{ x_0 \}$ up to the addition of constants and subject to a one-sided growth condition at infinity.

\begin{prop} \label{metexistence}
Assume that $H=H(p,y)$ satisfies \eqref{Hconvex}, \eqref{Hcoer} and \eqref{Huc}. Then for each $p\in \Rd$, $\mu > \overline H(p)$ and $x_0 \in \Rd$, there exists a unique solution $d_{x_0,\mu,p}$ of \eqref{EeqX} in $\Rd \setminus \{ x_0 \}$ satisfying
\begin{equation} \label{EeqBC}
\liminf_{|y|\to \infty} |y|^{-1} d_{x_0,\mu,p} (y) > 0 \quad \mbox{and} \quad d_{x_0,\mu,p}(x_0) = 0.
\end{equation}
\end{prop}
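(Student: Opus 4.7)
By the translation invariance of \eqref{EeqX} we may assume $x_0 = 0$. The plan is to construct $d_{0,\mu,p}$ by Perron's method -- using the gap $\mu > \overline H(p)$ both to guarantee that the admissible class is nonempty and to force linear-from-below growth at infinity -- and to obtain uniqueness from the comparison principle for \eqref{EeqX} in exterior domains that is advertised in the introduction and proved in Section~\ref{MP}. The admissible class is
\[
\mathcal F := \bigl\{ v \in C^{0,1}(\Rd) \, : \, H(p + Dv, \cdot) \leq \mu \ \text{a.e. in } \Rd, \ v(0) \leq 0 \bigr\},
\]
which we may read in the viscosity sense by Lemma~\ref{convtrick}(ii). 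Since $\mu > \overline H(p)$, \eqref{ssass} supplies a Lipschitz $w$ with $H(p + Dw,\cdot) \leq \mu$ and $|y|^{-1}w(y) \to 0$; the shift $w - w(0)$ lies in $\mathcal F$, so $\mathcal F \neq \emptyset$. By \eqref{Hcoer} and Lemma~\ref{convtrick}(i), every member of $\mathcal F$ is $L$-Lipschitz for a constant $L = L(\mu)$, so the candidate $d(y) := \sup_{v \in \mathcal F} v(y)$ is $L$-Lipschitz with $d(0) = 0$. Standard Perron arguments (see \cite{CIL,Ba}) then give that $d$ is a viscosity subsolution of $H(p + Du,y) = \mu$ on all of $\Rd$ and a viscosity supersolution on $\Rd\setminus\{0\}$: if the supersolution property failed at some $y_0 \neq 0$, a bump construction inside a ball $B(y_0,r) \ll \Rd\setminus\{0\}$ would supply a strictly larger competitor in $\mathcal F$, contradicting maximality of $d$.

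For the growth condition in \eqref{EeqBC}, continuity and convexity of $\overline H$ (Lemma~\ref{Hbarprop}) together with $\overline H(p) < \mu$ produce $\delta > 0$ with $\overline H(p + q) < \mu$ whenever $|q| \leq \delta$. For each such $q$, \eqref{ssass} supplies a Lipschitz $v_q$ satisfying $H(p + q + Dv_q,\cdot) \leq \mu$ and $|z|^{-1}v_q(z) \to 0$, so $y \mapsto q \cdot y + v_q(y) - v_q(0)$ lies in $\mathcal F$ and yields
\[
d(y) \geq q \cdot y + v_q(y) - v_q(0) \qquad \forall y \in \Rd.
\]
Applying this with $q_i := \delta \hat y_i$ as $\{\hat y_i\}_{i=1}^{N} \subset \partial B_1$ ranges over a sufficiently fine finite net, choosing $R$ so that $|v_{q_i}(y) - v_{q_i}(0)| \leq (\delta/4)|y|$ for $|y| \geq R$ and all $i$, and for each $|y| \geq R$ selecting an index $i$ with $\hat y_i \cdot y/|y|$ close to $1$, gives $d(y) \geq c|y|$ for some $c > 0$ and all sufficiently large $|y|$, as required.

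Uniqueness is the main obstacle, and it is the raison d'\^etre of Section~\ref{MP}: any two solutions of \eqref{EeqX} in $\Rd\setminus\{0\}$ obeying \eqref{EeqBC} must coincide by the exterior-domain comparison principle applied in both directions. That comparison principle crucially uses the strict inequality $\mu > \overline H(p)$ -- together with the convexity of $H$ -- to separate sub- and supersolutions at infinity against the weak one-sided growth in \eqref{EeqBC}, so the proposition reads as a structural statement: the gap $\mu > \overline H(p)$ is simultaneously what produces a linear-growth solution and what pins it down uniquely.
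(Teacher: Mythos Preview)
Your proof is correct and follows the same Perron-plus-comparison strategy as the paper. Two small remarks. First, the equation \eqref{EeqX} is \emph{not} translation-invariant (it depends explicitly on $y$); your reduction to $x_0=0$ is nonetheless valid because the hypotheses \eqref{Hconvex}--\eqref{Huc} and the quantity $\overline H(p)$ are unchanged when $H(\cdot,y)$ is replaced by $H(\cdot,y+x_0)$, so you should phrase it that way. Second, the only substantive difference from the paper is how you obtain the linear lower bound in \eqref{EeqBC}: the paper does it with a single competitor, observing that for small $\ep>0$ the function $\hat w(y):=w(y)+\ep(1+|y|^2)^{1/2}$ (with $w$ a sublinear subsolution supplied by \eqref{ssass}) remains a subsolution thanks to \eqref{Huc} and already has $\liminf_{|y|\to\infty}|y|^{-1}\hat w(y)\geq\ep$; you instead use the continuity of $\overline H$ to open up a $\delta$-ball of admissible tilts $q$ and then patch together a finite net of competitors $q\cdot y + v_q(y) - v_q(0)$. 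Both arguments are fine; the paper's is shorter, while yours avoids perturbing by a non-affine function and makes the role of $\overline H$'s continuity more explicit.
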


It follows (see Remark~\ref{redud}) that the distance functions $d_{\mu,x_0,p}$ and $d_{\mu,x_0,q}$ are redundant for $\mu > \max\{ \overline H(p),\overline H(q)\}$ in the sense that
\begin{equation*}
d_{\mu,x_0,p}(y) = d_{\mu,x_0,q}(y) + (q-p) \cdot (y-x_0).
\end{equation*}
Owing to Lemma~\ref{Hbarprop}, we can select any $p_* \in \Rd$ for which $\overline H(p_*) = \min_{\Rd} \overline H$. It follows that all the functions $\{ d_{\mu,x_0,{p}} \, : \, p \in \Rd\}$ can be described in terms of $d_{\mu,x_0,{p_*}}$. Set
\begin{equation} \label{distfundef}
d_{\mu,{x_0}} (y) : = d_{\mu,{x_0},{p_*}}(y) + p_*\cdot (y-x_0),
\end{equation}
and notice that $d_{\mu,{x_0}}$ is unambiguously defined even if $p_* $ is not the unique point at which the minimum of $\overline H$ is attained. It is clear that
\begin{equation} \label{DFoneside}
\liminf_{|y| \to \infty} |y|^{-1} \big( d_{\mu,x_0} (y) - p_*\cdot (y-x_0) \big) > 0
\end{equation}
and
\begin{equation} \label{distfuneqn}
H(Dd_{\mu,x_0} , y) = \mu \quad \mbox{in} \ \Rd\setminus \{ x_0 \}.
\end{equation}
In particular, \eqref{distfuneqn} implies, with the help of \eqref{Huc} and Lemma~\ref{convtrick}, that $d_{\mu,x_0}$ is globally Lipschitz and
\begin{equation} \label{distfunlip}
\| Dd_{\mu,x_0} \|_{L^\infty(\Rd)} \leq C_\mu.
\end{equation}
We call $\{ d_{\mu,x_0} \, : \, \mu > \min \overline H, \ x_0\in \Rd \}$ the \emph{set of distance functions for} $H$. We may now generalize the concept of comparisons with distance functions in the obvious way.

\begin{definition} \label{CCgen}
Suppose that $u: U\to \R$ is bounded. Then $u$ satisfies \emph{comparisons with distance functions from above} (with respect to $H$ in $U$) if
\begin{equation*}
\max_{x\in \overline V} \big(u(x) - d_{\mu,x_0}(x) \big) = \max_{x\in \partial V} \big( u(x) - d_{\mu,x_0}(x) \big)
\end{equation*}
provided that
\begin{equation} \label{cdfcg}
\mu > \min \overline H, \ \ V \ll U \quad \mbox{and} \quad x_0 \in \Rd \setminus V.
\end{equation}
Likewise, $u$ satisfies \emph{comparisons with distance functions from below} if \eqref{cdfcg} implies
\begin{equation*}
\min_{x\in \overline V} \big(u(x) + d_{\mu,-x_0}(-x) \big) = \min_{x\in \partial V} \big( u(x) + d_{\mu,-x_0}(-x) \big).
\end{equation*}
\end{definition}

Our definitions here agree with the ones in previous subsection. Indeed, if $H$ does not depend on $y$, then $\overline H = H$ and $d_{\mu,x_0}(y) = d_{\mu} (y - x_0)$, the latter functions being the ones appearing above and defined by \eqref{disfun-ponly}. This is clear from the uniqueness assertion in Proposition~\ref{metexistence}.

In Section~\ref{MP} we prove Proposition~\ref{metexistence} as well as the following generalization of half of Proposition~\ref{CC} (since we do not need the other half, we omit it). 

\begin{prop} \label{CCprop}
Assume \eqref{Hconvex}, \eqref{Hcoer} and \eqref{Huc}. If $u \in \USC(U)$ is an absolute subminimizer in $U$, then $u$ satisfies comparisons with distance functions from above in $U$.
\end{prop}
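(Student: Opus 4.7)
The plan is to argue by contradiction, adapting the Crandall–Evans–Gariepy comparison-with-cones proof by replacing cones with the distance functions $d_{\mu,x_0}$ and the slope bound $|Du| \leq b$ with the eikonal subsolution inequality $H(Du,\cdot) \leq \mu$. If the conclusion fails, there exist $\mu > \min \overline H$, $V \ll U$, and $x_0 \in \Rd \setminus V$ with
\[ M := \max_{\partial V}(u - d_{\mu,x_0}) < \max_{\overline V}(u - d_{\mu,x_0}), \]
so the violation set $W := \{x \in V : u(x) > d_{\mu,x_0}(x) + M\}$ is a nonempty open subset of $V$, has $\overline W \subseteq \overline V \subseteq U$, avoids $x_0$ (since $W \subseteq V$), and satisfies $u = d_{\mu,x_0} + M$ on $\partial W$.

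Next I would form the competitor $v := \min(u,\, d_{\mu,x_0} + M)$. By \eqref{distfunlip} and the fact that absolute subminimizers lie in $C^{0,1}_{\mathrm{loc}}(U)$, $v$ is Lipschitz on $\overline W$, agrees with $u$ on $\partial W$, lies below $u$ in $W$, and coincides with $d_{\mu,x_0} + M$ throughout $W$ (since $u > d_{\mu,x_0} + M$ there). By \eqref{distfuneqn} and Lemma~\ref{convtrick}, $H(Dv,\cdot) = \mu$ a.e.\ in $W$, so the absolute subminimizing hypothesis applied to $W \ll U$ and $v$ yields
\[ \esssup_W H(Du,x) \leq \esssup_W H(Dv,x) = \mu, \]
exhibiting $u$, via Lemma~\ref{convtrick} again, as a viscosity subsolution of $H(Du,x) \leq \mu$ in $W$.

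To close, I would compare $u$ with $d_{\mu,x_0} + M$, which by \eqref{distfuneqn} is an exact solution of $H(Dw,x) = \mu$ in $W \subset \Rd \setminus \{x_0\}$. A viscosity comparison principle for the eikonal equation, combined with $u = d_{\mu,x_0} + M$ on $\partial W$, would give $u \leq d_{\mu,x_0} + M$ in $W$, directly contradicting the defining strict inequality of $W$. The main obstacle is precisely this comparison step: the eikonal equation is not monotone in the unknown, so standard comparison needs an extra structural ingredient. Here the threshold condition $\mu > \overline H(p_*) = \min \overline H$ is decisive, producing via \eqref{infsup} a strictly sublinear $\phi$ with $H(p_* + D\phi, y) < \mu$ a.e.\ — a strict global subsolution — which powers a doubling-of-variables comparison on the bounded set $W$; this is the machinery to be developed in Section~\ref{MP}. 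An alternative closure, avoiding the abstract comparison principle, is a geodesic argument: pick $x_2 \in W$ realizing $\max_{\overline W}(u - d_{\mu,x_0})$, follow an optimal path for the $H$-distance from $x_0$ toward $x_2$ and let $x_3$ be its last exit from $W$; then the path-integral bound $u(x_2) - u(x_3) \leq d_{\mu,x_3}(x_2)$ (from $H(Du,\cdot) \leq \mu$ along the segment), the triangle inequality $d_{\mu,x_0}(x_2) \leq d_{\mu,x_0}(x_3) + d_{\mu,x_3}(x_2)$, and the boundary identity $u(x_3) = d_{\mu,x_0}(x_3) + M$ combine to give $u(x_2) \leq d_{\mu,x_0}(x_2) + M$, the desired contradiction.
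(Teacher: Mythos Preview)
Your argument is correct and shares the same skeleton as the paper's proof: set up the violation set $W$, use the absolute subminimizing property against the competitor $d_{\mu,x_0}+M$ to obtain $H(Du,\cdot)\leq\mu$ in $W$, and then compare $u$ with $d_{\mu,x_0}+M$ to derive a contradiction. The difference lies in the closing comparison. The paper glues $u$ back into a global object, defining $v:=\max\{u-\ep,\,d_{\mu,x_0}\}$ on $V$ and $v:=d_{\mu,x_0}$ off $V$, so that $v$ is a subsolution of $H(Dv,\cdot)=\mu$ on all of $\Rd\setminus\{x_0\}$, and then invokes the exterior-domain comparison Proposition~\ref{metcomp}. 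Your first closure instead stays on the bounded set $W$ and uses the convex combination $\lambda u+(1-\lambda)(p_*\cdot y+\phi)$ with a strictly sublinear $\phi$ satisfying $H(p_*+D\phi,\cdot)<\mu$ to manufacture a \emph{strict} subsolution, after which ordinary bounded-domain comparison applies; sending $\lambda\to1$ finishes. This is legitimate and arguably more elementary, since it sidesteps the growth-at-infinity bookkeeping that Proposition~\ref{metcomp} is designed to handle. The paper's route has the virtue of reusing a single tool (Proposition~\ref{metcomp}) that it needs anyway for the existence and uniqueness of the $d_{\mu,x_0}$.

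Your geodesic alternative, however, does not fit the paper's framework and has a gap as written. The distance functions $d_{\mu,x_0}$ are constructed here purely as PDE objects (via Perron and Proposition~\ref{metcomp}); no optimal-path or control representation is established, so speaking of ``an optimal path for the $H$-distance from $x_0$ toward $x_2$'' and its ``last exit from $W$'' imports machinery the paper deliberately avoids (cf.\ the remarks contrasting with~\cite{CdP}). More seriously, the key inequality you invoke, $u(x_2)-u(x_3)\leq d_{\mu,x_3}(x_2)$ ``from $H(Du,\cdot)\leq\mu$ along the segment'', is essentially the comparison statement you are trying to prove, and in any case would require the relevant portion of the path to lie in $W$ (the only place where you know $H(Du,\cdot)\leq\mu$), together with an intrinsic-versus-ambient distance inequality going the wrong way. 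Drop this alternative and keep your bounded-domain comparison; that part is clean.
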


\section{The proof of homogenization} \label{O}

In this section we prove that the distance functions homogenize and we give the proof of Theorem~\ref{main} subject to the verification of some key intermediate results which are postponed. 

\subsection{The homogenization of the distance functions}
Much of the heavy lifting in the proof of Theorem~\ref{main} lies in the homogenization of the distance functions, which we now describe. In this subsection, the Hamiltonian $H=H(p,y,\omega)$ satisfies the hypotheses described in Section~\ref{P} and, in particular, \eqref{erghyp}, \eqref{Hstat}, \eqref{Hconv}, \eqref{Hcoe} and \eqref{Hec} are in force.

For each fixed $\omega \in \Omega$, we denote by $d_{\mu,x_0} = d_{\mu,x_0}(\cdot,\omega)$ the distance functions for the Hamiltonian $H(\cdot,\cdot,\omega)$, which are well-defined for each $p\in \Rd$, $\mu > \overline H(p,\omega)$ and $x_0\in \Rd$. Here the quantity $\overline H(p,\omega)$ is defined as in \eqref{ssass}, with respect to the Hamiltonian $H(\cdot,\cdot,\omega)$.

It is clear from the inf-sup formula \eqref{infsup} that $\overline H(p,\omega)$ is measurable in $\omega$.  The stationarity hypothesis \eqref{Hstat} yields that $\overline H(p,\omega) = \overline H(p,\tau_y\omega)$ for every $y\in \Rd$, and hence the ergodic hypothesis \eqref{erghyp} implies that $\overline H$ is constant in $\omega$; that is, there exists $\overline H(p)$ such that $\overline H (p,\omega) = \overline H(p)$ a.s.\!\! in $\omega$. Moreover, since $\overline H=\overline H(\cdot,\omega)\in C(\Rd)$ for each $\omega\in \Omega$, there exists a subset $\Omega_1\subseteq \Omega$ of full probability such that
\begin{equation} \label{Hbarconst}
\overline H(p,\omega) = \overline H(p) \quad \mbox{for every} \ \  p\in \Rd, \ \omega \in \Omega_1.
\end{equation}
Indeed, for each rational $p$ we can find a subset of full probability on which $H(p,\omega)$ is constant, and we construct $\Omega_1$ by taking the intersection of these. Since $\overline H(\cdot,\omega)$ is continuous by Lemma~\ref{Hbarprop}, we obtain \eqref{Hbarconst}.

The distance functions $d_{\mu,x_0}$ are thus well-defined for each $p\in \Rd$, $\mu > \overline H(p)$ and $x_0\in \Rd$. We consider them to be functions of $(y,\omega) \in \Rd \times \Omega$. While we do not give the argument here in order to avoid an overly pedantic presentation, we remark that $d_{\mu,x_0}$ is measurable in $(y,\omega)$, a fact which follows more or less from the uniqueness of the distance functions asserted in Proposition~\ref{metexistence}. The distance functions are stationary in the sense that
\begin{equation} \label{distfunstat}
d_{\mu,x_0} (y,\tau_z\omega) = d_{\mu,x_0+z}(y+z,\omega),
\end{equation}
a fact which is immediate from \eqref{Hstat} and uniqueness.

It follows from \eqref{Hbarconst} that the effective Hamiltonian $\overline H$ satisfies the conclusion of Lemma~\ref{Hbarprop}. In particular, $\overline H$ possesses distance functions $\overline d_{\mu}$ as described in Section~\ref{babydist}. 

The distance functions for $H^\ep(p,y,\omega):= H(p,y/\ep,\omega)$, which we denote by $d^\ep_{\mu,x_0}$, are expressed in terms of $d_{\mu,x_0}$ by
\begin{equation} \label{laidout}
d^\ep_{\mu,x_0}(y,\omega) : = \ep d_{\mu,x_0/\ep} \big( y/\ep,\omega \big).
\end{equation}
We then have the following homogenization result for the distance functions, which asserts that, as $\ep \to 0$, the distance function $d^\ep_{\mu,x_0}(y,\omega)$ converges to $\overline d_{\mu}(y-x_0)$ on a set of full probability.

\begin{prop}\label{conehmg}
There exists a subset $\Omega_0 \subseteq \Omega$ of full probability such that, for every $\mu > \min\overline H$, $x_0 \in \Rd$ and $\omega \in \Omega_0$,
\begin{equation} \label{limhmgdf}
d^\ep_{\mu,x_0}(\cdot,\omega) \rightarrow \overline d_\mu(\cdot-x_0) \quad \mbox{locally uniformly in}  \ \Rd \ \ \mbox{as} \ \ \ep \rightarrow 0.
\end{equation}
\end{prop}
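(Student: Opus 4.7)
The plan is to pass to locally uniform subsequential limits of $d^\ep_{\mu,x_0}(\cdot,\omega)$, identify each such limit as the unique solution $\overline d_\mu(\cdot - x_0)$ of the effective eikonal problem via the characterization from Section~\ref{babydist}, and intersect over a countable dense set of parameters to produce the full-probability event $\Omega_0$.

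The first step is compactness. The Lipschitz bound \eqref{distfunlip} depends only on $\mu$ and the uniform coercivity data in \eqref{Hec}, so it is deterministic. Because $Dd^\ep_{\mu,x_0}(y,\omega) = Dd_{\mu,x_0/\ep}(y/\ep,\omega)$ and $d^\ep_{\mu,x_0}(x_0,\omega)=0$, the family $\{d^\ep_{\mu,x_0}(\cdot,\omega)\}_{\ep>0}$ is $C_\mu$-Lipschitz and locally uniformly bounded for each $\omega$. Arzel\`a--Ascoli then extracts, from any sequence $\ep_k\to 0$, a further subsequence converging locally uniformly to some $C_\mu$-Lipschitz $d^\star(\cdot,\omega)$ vanishing at $x_0$.

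The main task is to show, on a $(\mu,x_0)$-independent full-probability set, that every such $d^\star$ is a viscosity solution of $\overline H(Du)=\mu$ in $\Rd\setminus\{x_0\}$. For the supersolution property at $y_0\neq x_0$ I would run a perturbed test function argument built on the inf-sup formula \eqref{infsup}: if a smooth $\varphi$ touches $d^\star$ strictly from below at $y_0$ and $\overline H(D\varphi(y_0))<\mu$, then for some $\delta>0$ there is a strictly sublinear corrector $w=w(\cdot,\omega)$ with $H(D\varphi(y_0)+Dw(y,\omega),y,\omega)\leq \mu-\delta$ almost everywhere, whence $\varphi^\ep(y):=\varphi(y)+\ep w(y/\ep,\omega)$ converges locally uniformly to $\varphi$ and, by uniform continuity of $H$ in $p$, is a subsolution of $H(Du,y/\ep,\omega)\leq \mu-\delta/2$ in a small ball around $y_0$; it therefore touches $d^\ep_{\mu,x_0}$ from below at a nearby point and contradicts the supersolution property of $d^\ep_{\mu,x_0}$. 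The subsolution direction is the delicate part and, I expect, the main obstacle of the entire proof. The natural route is to exploit the near-subadditivity
\[
d_{\mu,x_0}(y,\omega) \leq d_{\mu,z}(y,\omega) + d_{\mu,x_0}(z,\omega)
\]
together with the stationarity \eqref{distfunstat} to set up a continuous subadditive process to which Proposition~\ref{SAergthm} applies; the resulting deterministic a.s. limit of $\ep \mapsto \ep d_{\mu,0}(y/\ep,\omega)$ along rays is a positively homogeneous, convex function which one identifies as $\overline d_\mu$, and stability of viscosity subsolutions under uniform convergence transfers the inequality $\overline H(Du)\leq \mu$ to any Arzel\`a--Ascoli limit $d^\star$. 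This is the substance of the arguments postponed to Sections~\ref{MP},~\ref{EH}, and~\ref{H}.

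To conclude, passing to the limit in \eqref{DFoneside} with the Lipschitz control yields $\liminf_{|y|\to\infty}|y|^{-1}(d^\star(y,\omega)-p_*\cdot(y-x_0))>0$, and Proposition~\ref{metexistence} applied to the deterministic $\overline H$ (equivalently, Section~\ref{babydist}) forces $d^\star(\cdot,\omega)=\overline d_\mu(\cdot-x_0)$, so the full family converges. The set $\Omega_0$ is built by first executing the argument on a countable dense subset of $(\mu,x_0)\in(\min\overline H,\infty)\times\Rd$, intersecting the resulting events of full probability, and upgrading to arbitrary $(\mu,x_0)$ via the equi-Lipschitz bounds and continuity of $(\mu,x_0)\mapsto \overline d_\mu(\cdot-x_0)$.
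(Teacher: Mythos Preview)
Your compactness step and the countable-intersection strategy for producing $\Omega_0$ are sound, and your supersolution argument---perturbing the test function by a subcorrector pulled directly from the inf-sup formula \eqref{infsup}---is a legitimate (and in fact slightly more direct) alternative to what the paper does. The genuine gap is in the subsolution direction. You correctly invoke the subadditive ergodic theorem to obtain a deterministic limit $a_\mu$ along rays, and the paper does exactly this. But your identification of that limit with $\overline d_\mu$ is circular: you write that ``stability of viscosity subsolutions under uniform convergence transfers the inequality $\overline H(Du)\leq \mu$ to any Arzel\`a--Ascoli limit $d^\star$,'' yet $d^\ep$ is a subsolution of $H(Du,y/\ep,\omega)=\mu$, not of $\overline H(Du)=\mu$, and passing from the oscillating operator to the effective one \emph{is} the homogenization step you are trying to prove. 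The inf-sup characterization \eqref{infsup} only manufactures \emph{subsolutions} of the eikonal equation, so it can perturb a smooth test function touching from below (your supersolution half) but gives you nothing with which to perturb a test function touching from above. For that you need a corrector which is a \emph{supersolution}, and this is precisely the role of the approximate correctors $v^\delta$ from Section~\ref{EH}: they solve \eqref{mac} in both viscosity senses, and the paper runs the perturbed test function argument for the subsolution half of \eqref{meteq-bar} using the supersolution property of $v^{\delta_j}$ together with the a.s.\ convergence $-\delta_j v^{\delta_j}\to \widetilde H$ along the subsequence of Remark~\ref{assubseq}. (One must then also check $\widetilde H=\overline H$, which the paper does via Remark~\ref{remkey} and the subcorrector of Proposition~\ref{mainstep}.) Without this ingredient or a substitute, nothing in your outline bounds $a_\mu$ from above by $\overline d_\mu$.

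A secondary point: ``passing to the limit in \eqref{DFoneside}'' does not immediately deliver the growth condition for $d^\star$, because the positive constant implicit in \eqref{DFoneside} is a priori allowed to depend on the vertex $x_0/\ep$ and hence could degenerate as $\ep\to 0$. One needs a lower barrier uniform in the vertex; the paper gets this by comparing with $\hat w(y)=w(y,\omega)+\ep_0(1+|y|^2)^{1/2}$ from the proof of Proposition~\ref{metexistence}, where $\ep_0>0$ can be chosen deterministically because the gap $\mu-\overline H(p_*)$ and the modulus in \eqref{Hec} are.
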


The proof of Proposition~\ref{conehmg} is taken up in Sections~\ref{EH} and~\ref{H}, and is based on ideas recently developed by the authors in~\cite{ASo} as well as the earlier work of Lions and Souganidis~\cite{LS}.

\subsection{The proof of the homogenization results}
We now present the proofs of our main results, subject to the completion of the proofs of Propositions~\ref{metexistence}, \ref{CCprop}, and \ref{conehmg}.
The main idea is to exploit the connection between absolute minimizers and distance functions, thereby essentially reducing the work to that of homogenizing a first-order eikonal equation.

\begin{proof}[{Proof of Theorem~\ref{main}}]
We need only prove the first statement, since the second one follows from the first and the insertion of negative signs in appropriate places (see the comments in Section~\ref{NC}). According to Proposition~\ref{CC}, it is equivalent to argue that, a.s.~in~$\omega$, $u^*(\cdot,\omega)$ satisfies comparisons with distance functions from above with respect to $\barH$ in $U$. We denote the distance functions for $\barH$ by $\overline d_\mu$, and we may assume with no loss of generality that $\overline H(0) = \min \overline H$. Arguing by contradiction, we suppose on the contrary that there exists $\mu > \min_{p\in \Rd} \barH(p)$, $V\ll U$ and $x_0 \in \Rd\setminus V$ such that
\begin{equation} \label{bad}
\max_{x\in \overline V} \big(u^*(x,\omega) - \overline{d}_\mu(x-x_0) \big) > \max_{x\in \partial V} \big( u^*(x,\omega) - \overline{d}_\mu(x-x_0) \big).
\end{equation}
According to Proposition~\ref{conehmg}, it follows that for each $\omega$ in a subset of $\Omega$ of full probability,
\begin{equation} \label{reallybad}
\max_{x\in \overline V} \big(u^\ep(x,\omega) - d^\ep_{\mu,x_0}(x,\omega) \big) > \max_{x\in \partial V} \big( u^\ep(x,\omega) - d^\ep_{\mu,x_0}(x,\omega) \big)
\end{equation}
for small enough $\ep >0$. From this we derive a contradiction, thanks to Proposition~\ref{CCprop} and the assumption that $u^\ep$ is an absolute subminimizer for $H^\ep(\cdot,\cdot,\omega)$ in $U$.
\end{proof}

\begin{proof}[{Proof of Theorem~\ref{mainduh}}]
If $\argmin\overline H$ has empty interior, then there is a comparison principle for absolute minimizers of $H$ in bounded domains. This is the main result of \cite{ACJS}. Therefore, using Theorem~\ref{main}, we have $u^*(x,\omega) \leq u_*(x,\widetilde \omega)$ for any $\omega,\widetilde \omega\in \Omega$. We deduce that $u^*(x,\omega)=u_*(x,\omega)=:u(x)$, and so $u$ is the (necessarily unique) absolute minimizer for $\overline H$ in $U$.
\end{proof}

\subsection{An idea for an alternative proof of the homogenization results}
Given a nice (bounded) function $u:\Rd \to\R$, define the flow $T^tu(x):= v(x,t)$, where $v$ is the viscosity solution of the initial value problem
\begin{equation} \label{HJflow}
\left\{ \begin{aligned}
& v_t - H(Dv) = 0 & \mbox{in} & \ \Rd \times(0,\infty), \\
& v = u & \mbox{on} & \ \Rd\times\{ 0 \}.
\end{aligned} \right.
\end{equation}
Barron, Evans and Jensen~\cite{BEJ} conjectured (and provided a formal argument suggesting) that subsolutions $u$ of the Aronsson equation (in our language, absolute subminimizers) for a Hamiltonian $H=H(p)$ should be characterized by the property that, for every $x$,
\begin{equation*}
t\mapsto T^t u(x) \quad \mbox{is convex.}
\end{equation*}
This convexity criterion was proved for smooth $H$ by Juutinen and Saksman~\cite{JS} and for general convex $H=H(p)$ in \cite{ACJS}. For bounded domains, it is a little awkward to state the convexity criterion  in terms of the Hamilton-Jacobi flow, and for this reason \eqref{HJflow} was abandoned in \cite{ACJS} and replaced by the Hopf-Lax formula
\begin{equation} \label{HL}
T^tu(x): = \sup_{y\in U} \Big( u(y) - t L\Big(\frac{y-x}{t} \Big) \Big).
\end{equation}

At first glance, it may seem that the convexity criterion provides a more natural connection between absolute minimizers and the corresponding Hamilton-Jacobi equation. Indeed, if the convexity criterion could be generalized in an appropriate way to Hamiltonians with spacial dependence, then our Theorems~\ref{main} and~\ref{mainduh} would follow immediately from the homogenization of Hamilton-Jacobi equations~\cite{S}.

Unfortunately, it is an open problem whether the convexity criterion can be generalized to absolute minimizers of Hamiltonians with spacial dependence. The obstacle in the argument lies in showing that the absolutely subminimizing property is preserved under the flow $T^t$. This is obvious from the Hopf-Lax formula \eqref{HL} in the case $H=H(p)$, and can be shown to hold if everything is smooth, but sticky regularity issues have thus far thwarted efforts at making this rigorous.

\section{A sufficient condition for Theorem~\ref{mainduh}} \label{TOW}

The effective Hamiltonian $\overline H$ is difficult to study, even in periodic environments, and so it is not easy to determine in which situations we can expect $\overline H$ to have a ``flat spot" at its minimum, i.e., whether $\argmin \overline H$ has nonempty interior. In the periodic case and in dimension $n=1$, a Hamiltonian of the form $H(p,y) = |p|^2 + V(y)$ give rise to effective Hamiltonian $\overline H$ which can be computed explicitly (see \cite{LPV}). In this case, $\overline H$ indeed possesses a flat spot. In dimensions $n\geq 2$ the analogous situation is much more complicated, but some sufficient conditions ensuring flat spots can be found in Concordel~\cite{C}.

There is probably a connection between the appearance of flat spots for $\overline H$ and the failure of the comparison principle to hold for absolute minimizers of the corresponding Hamiltonian $H=H(p,y,\omega)$. We hope that future research will shed some light on this question. Examples found in Yu~\cite{Y} and \cite{JWY} demonstrate that, even in dimension $n=1$, the Hamiltonians $H(p,x)=p^2 + \sin x$ and $H(p,x) = (2-\sin^2 x)^{-1} (p^2-1)$ exhibit multiple smooth absolute minimizers with the same boundary values. For both of these, the corresponding $\overline H$ has a flat spot. What is more, the sufficient condition we outline below for $\argmin \overline H$ to have empty interior is the same condition conjectured in \cite{JWY} to be sufficient for the comparison principle to hold for absolute minimizers of $H$.

The crude two-sided bound
\begin{equation} \label{crude2side}
\essinf_{\omega \in \Omega} H(p,0,\omega) \leq \overline H(p) \leq \esssup_{\omega \in \Omega} H(p,0,\omega),
\end{equation}
which follows from stationarity and the bound $\inf_{y\in\Rd} H(p,y) \leq \overline H(p,y) \leq \sup_{y\in\Rd} H(p,y)$ proved in Section~\ref{bigdist}, provides the following simple sufficient condition for $\overline H$ to have no flat spot: 
\begin{equation} \label{Hbarnoflat}
\begin{cases}
\text{there exists  some closed $\Gamma \subseteq \Rd$ with empty interior such that} \\[2mm] 
 \ H \equiv h_0 \ \ \mbox{on} \ \ \Gamma \times \Rd \times \Omega \ \  \mbox{and} \ \ h_0 < \essinf_{y\in\Rd} H(p,y,\omega) \ \ \mbox{for all} \ p  \in \Rd\setminus \Gamma.
\end{cases}
\end{equation}
Indeed, it is clear from \eqref{crude2side} that \eqref{Hbarnoflat} implies $\argmin \overline H = \Gamma$.

An example of an explicit Hamiltonian satisfying \eqref{Hbarnoflat}, in this case with $\Gamma=\{ 0 \}$, is
\begin{equation} \label{exameq}
H(p,y,\omega) =  \frac{p\cdot a(y,\omega)p}{2|p|},
\end{equation}
where $a$ is a stationary process with values in the positive matrices and $\lambda \leq a(y,\omega) \leq \Lambda$ for all $(y,\omega)\in \Rd\times\Omega$.

A Hamiltonian with no flat spot but which does not satisfy \eqref{Hbarnoflat} is
\begin{equation}\label{refereesug}
H(p,y,\omega) : =  \frac12 |p|^2 + b(y,\omega)\cdot p,
\end{equation}
where, in addition to being stationary ergodic, Lipschitz and bounded, the vector field $b$ satisfies the mean-zero and divergence-free condition
\begin{equation}\label{dzmz}
\mathrm{div}\, b \equiv 0 \quad \mbox{and} \quad \E[ \,b(0,\cdot)] = 0.
\end{equation}
It is obvious that $\overline H(0) = 0$. We will demonstrate the lack of a flat spot by showing that
\begin{equation}\label{flowc}
\overline H(p) \geq \frac12 |p|^2
\end{equation}
To prove \eqref{flowc}, select a nonnegative smooth cutoff function $\varphi_\delta$ which has support in $B_{R/\delta}$ for large $R>0$ and such that $\int_{\Rd} \varphi_\delta(y) \, dy = 1$ and $\int_{\Rd} |D\varphi_\delta| \, dy \leq \delta/R$. It is possible to choose, for example, a suitable multiple of (a regularization of) the function $\varphi_\delta(y) : = (R^2\delta^{-2} - |y|^2)_+$. Multiplying \eqref{mac} by $\varphi_\delta$ and integrating over $\Rd \times \Omega$, we have, after an integration by parts and in view of \eqref{dzmz}:
\begin{align*}
0 = \E \big[ \delta v^\delta \big] + \frac12 \E\big[ |p+Dv^\delta|^2 \big] - \E \int_{\Rd} v^\delta b\cdot D\varphi_\delta\, dy.
\end{align*}
Using Jensen's inequality and passing to the limit $\delta \to 0$ we obtain, using the results in Section~\ref{EH},
\begin{align*}
0 \geq -\overline H(p) + \frac12 |p|^2  - C\overline |H(p)| \limsup_{\delta\to 0} \frac1\delta \int_{\Rd} D\varphi_\delta\, dy \geq -\overline H(p) + \frac12 |p|^2  - \frac{C}{R} |\overline H(p)|.
\end{align*}
A rearrangement of this expression yields \eqref{flowc} after sending $R\to \infty$.

\section{The proofs of the results about the distance functions}
\label{MP}

We begin with a comparison principle for \eqref{EeqX} in exterior domains for $\mu > \overline H(p)$, following an argument introduced very recently by the authors in \cite{ASo}. Its main feature, which makes it quite unusual when compared to comparison results found in the literature, is that it is not assumed that the subsolution and supersolution separate at most strictly sublinearly from each other at infinity. Indeed, we merely require the negative part of the supersolution to be strictly sublinear at infinity and the subsolution to grow no fast than $\sim|x|$.

In the proof, we lower the subsolution until it has strictly sublinear separation from $v$, and apply the usual comparison principle for Hamilton-Jacobi equations. It is then shown that, if we had lowered $u$ at all, then we could have lowered it a bit less-- and therefore we need not have lowered it at all. To prove the latter, a term $\varphi_R$, which is small in balls of radius $\sim R$ but grows linearly at infinity for each fixed $R$, is subtracted from the subsolution. We then compare the result with $v$, and then conclude sending $R \to \infty$. The fact that the parameter $\mu$ is strictly larger than $\overline H(p)$ permits us to compensate for this perturbation with the use of a global subsolution of \eqref{EeqX}.

\begin{prop} \label{metcomp}
Assume that $H:\Rd\times\Rd\to\R$ satisfies \eqref{Hconvex}, \eqref{Hcoer} and \eqref{Huc}. Fix $p\in \Rd$, $\mu > \overline H(p)$ and a compact subset $D\subseteq \Rd$. Suppose that $u \in \USC(\overline{\Rd\!\setminus \!D})$ is a subsolution of \eqref{EeqX}, $v \in \LSC(\overline{\Rd\!\setminus \!D})$ is a supersolution of \eqref{EeqX}, and
\begin{equation} \label{meteqcmpgc}
\liminf_{|x|\to \infty} \frac{v(x)}{|x|} > 0 \quad \mbox{and} \quad \limsup_{|x|\to \infty} \frac{u(x)}{|x|} < \infty.
\end{equation}
Then
\begin{equation} \label{metcmpcon}
\sup_{\Rd \setminus D} (u-v) = \sup_{\partial D} (u-v).
\end{equation}
\end{prop}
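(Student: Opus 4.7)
The proof will proceed by contradiction, leveraging the strict inequality $\mu > \overline H(p)$ to produce a continuous family of ``lowered'' versions of $u$ whose asymptotic separation from $v$ can be made favorable for a standard viscosity maximum principle. From the definition \eqref{ssass}, I would select $\mu' \in (\overline H(p), \mu)$ and a global subsolution $w \in C^{0,1}(\Rd)$ of $H(p + Dw, y) \leq \mu'$ satisfying $\lim_{|y|\to \infty} |y|^{-1} w(y) = 0$. For each $\theta \in (0,1]$, set $u_\theta := (1-\theta) u + \theta w$; the convexity of $H$ in $p$ together with Lemma~\ref{convtrick} shows that $u_\theta$ is a viscosity subsolution in $\Rd \setminus D$ of $H(p + Du_\theta, y) \leq \mu_\theta$, where $\mu_\theta := (1-\theta)\mu + \theta \mu' < \mu$. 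Moreover, Lemma~\ref{convtrick} ensures $u$ is Lipschitz, so $L := \limsup_{|y|\to\infty} |y|^{-1} u(y)$ is finite and $\limsup_{|y|\to\infty} |y|^{-1} u_\theta(y) \leq (1-\theta) L$. Since $u_\theta \to u$ locally uniformly as $\theta \downarrow 0$ and $\partial D$ is compact, the desired conclusion \eqref{metcmpcon} will follow once one shows $\sup_{\Rd \setminus D}(u_\theta - v) = \sup_{\partial D}(u_\theta - v)$ for every $\theta \in (0,1]$ and passes to the limit.

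The identity for $u_\theta$ would be proven in two stages. Writing $c := \liminf_{|y|\to\infty} |y|^{-1}v(y) > 0$, when $\theta$ is so close to $1$ that $(1-\theta)L < c$, the difference $u_\theta - v$ tends to $-\infty$ at infinity, so its supremum over $\overline{\Rd \setminus D}$ is attained in a compact set; the strict slack $\mu - \mu_\theta > 0$ between the subsolution and supersolution inequalities then rules out an interior maximum via a standard Crandall--Ishii doubling-of-variables argument, placing the supremum on $\partial D$. To descend to arbitrarily small $\theta$, I would introduce the $\varphi_R$ perturbation: with $\varphi_R(y) := (|y|-R)_+$, which is $1$-Lipschitz, vanishes on $B_R \supset D$ for $R$ large, and grows linearly at infinity, consider $\widetilde u_R := u_\theta - \eta\varphi_R$. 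The uniform continuity \eqref{Huc} of $H$ in $p$ yields $H(p + D\widetilde u_R, y) \leq \mu_\theta + \sigma(\eta)$ for a modulus $\sigma$, and the budget $\mu - \mu_\theta = \theta(\mu - \mu') > 0$ — available precisely because $\mu > \overline H(p)$ — lets one choose $\eta$ small enough that $\widetilde u_R$ remains a strict subsolution while $-\eta \varphi_R$ drives $\widetilde u_R - v \to -\infty$ at infinity. Direct comparison as in the first stage then gives $\sup_{\Rd \setminus D}(\widetilde u_R - v) = \sup_{\partial D}(\widetilde u_R - v)$, and since $\varphi_R \to 0$ pointwise as $R \to \infty$, sending $R \to \infty$ upgrades this to the analogous identity for $u_\theta$.

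The main obstacle I foresee is the delicate balance in the $\varphi_R$ step: $\sigma(\eta)$ must fit inside the slack $\theta(\mu - \mu')$ while $\eta$ must simultaneously dominate the residual growth rate $(1-\theta)L - c$ of $u_\theta - v$ whenever $L > c$, and for small $\theta$ both sides of this inequality tighten. Reconciling them for every $\theta \in (0,1]$ is likely to require an iterative or continuity argument showing that the set of $\theta$ for which comparison holds is both open and closed in $(0,1]$; the hint ``if we had lowered $u$ at all, then we could have lowered it a bit less'' suggests treating $u_{\theta_0}$ (for any $\theta_0$ at which comparison is already known) as a new baseline whose effective asymptotic rate $(1-\theta_0)L$ is strictly less than $L$, and rerunning the $\varphi_R$ trick with this improved rate. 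The crucial structural input throughout remains the strict positivity of $\mu - \overline H(p)$, which alone furnishes the perturbation budget that drives the bootstrap and ultimately the passage $\theta \downarrow 0$.
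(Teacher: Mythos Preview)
Your strategy matches the paper's: convex interpolation between $u$ and a sublinear global subsolution $w$, a linearly-growing perturbation $\varphi_R$ to force decay at infinity, and a bootstrap pushing the interpolation parameter to its endpoint. The paper packages the bootstrap differently in a way that dissolves the tension you flag. Instead of tracking the $\theta$ for which full comparison holds for $u_\theta$, it tracks $\Lambda := \{\lambda\in[0,1] : \liminf_{|x|\to\infty}(v-\lambda u)/|x| \geq 0\}$ and shows $\bar\lambda := \sup\Lambda = 1$ by contradiction; and instead of the additive perturbation $u_\theta - \eta\varphi_R$, it uses a single three-way convex combination $(1-\eta)(\lambda+\eta)\,u + (1-\eta)(1-\lambda-\eta)\,w + \eta\theta\,\varphi_R$. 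The point is that the strictness in the level then comes from the $(1-\lambda)$-sized weight on $w$ (via $\mu' < \mu$), not from $\eta$, so $\eta$ is free to control the asymptotics without competing against a vanishing budget $\theta(\mu-\mu')$. Classical comparison plus $R\to\infty$ yields $\bar\lambda \geq (1-\eta)(\lambda+\eta)$, and sending $\lambda\to\bar\lambda$ with $\eta>0$ fixed gives the contradiction in one stroke rather than through an infinite iteration. Your scheme also works once the iteration is set up correctly, but note a small correction to your last sentence: what you gain from having $\theta_0$ already in hand is not that $u_{\theta_0}$ has rate $(1-\theta_0)L$, but that comparison gives $v \geq u_{\theta_0} - C$ on $\Rd\setminus D$, hence $u_\theta - v \leq (\theta_0-\theta)(u-w) + C$, whose growth rate $(\theta_0-\theta)L$ is small when $\theta$ is near $\theta_0$; this is what lets you absorb it with an $\eta$ fitting inside the slack $\theta(\mu-\mu')\approx\theta_0(\mu-\mu')$.
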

\begin{proof}
Since $p$ plays no role, we may suppose for simplicity that $p=0$. We may also assume $\limsup_{|x| \to \infty} u(x)/|x| \geq 0$, since otherwise the result is immediate from the classical comparison principle. Define
\begin{equation*}
\Lambda : = \Big\{ 0 \leq \lambda \leq 1 : \liminf_{|x| \to \infty} \frac{v(x) - \lambda u(x)}{|x|} \geq 0 \Big\} \qquad \mbox{and} \qquad \overline \lambda: = \sup \Lambda.
\end{equation*}
 The assumption \eqref{meteqcmpgc} implies that $[0,\beta) \in \Lambda$ for some $\beta > 0$, and hence $\overline \lambda > 0$. We next show that $\Lambda = \big[0,\overline \lambda \big]$. To see that $\overline \lambda \in \Lambda$, select $\ep > 0$ and $\lambda \in \Lambda$ with $\overline \lambda \leq \lambda + \ep$ and observe that by \eqref{meteqcmpgc},
\begin{multline*}
\liminf_{|x| \to \infty} \frac{v(x) - \overline \lambda u(x)}{|x|}  \geq \frac{\overline \lambda}{\lambda+\ep} \liminf_{|x| \to \infty} \frac{v(x) - (\lambda+\ep)u(x)}{|x|} \\ \geq \frac{\overline \lambda}{\lambda+\ep} \left( -\ep \limsup_{|x| \to\infty} \frac{u(x)}{|x|}\right) \geq -C\overline \lambda \ep(\lambda+\ep)^{-1}.
\end{multline*}
Sending $\ep \to 0$ yields $\overline \lambda \in \Lambda$. If $\lambda \in \big(0,\overline \lambda\big)$, then using again \eqref{meteqcmpgc}, we have
\begin{equation*}
\liminf_{|x| \to \infty} \frac{v(x) - \lambda u(x)}{|x|}  \geq \frac{\lambda}{\overline \lambda} \liminf_{|x| \to \infty} \frac{v(x) - \overline \lambda u(x)}{|x|} \geq 0.
\end{equation*}
The claim is proved.

We claim that $\overline \lambda = 1$. Select $\lambda \in \Lambda$ with $0<\lambda < 1$. For each $R> 1$, define the auxiliary function
\begin{equation} \label{varphiR}
\varphi_R(x): = R- (R^2+|x|^2)^{1/2},
\end{equation}
and observe that, for a constant $C > 0$ independent of $R> 1$,
\begin{equation} \label{phidcn}
\sup_{x\in \Rd} |D\varphi_R(x)| \leq C.
\end{equation}
We have defined $\varphi_R$ in such a way that $-\varphi_R$ grows at a linear rate at infinity, which is independent of $R$, while $\varphi_R \to 0$ as $R \to \infty$. Indeed, it is easy to check that
\begin{equation} \label{phideath}
|\varphi_R(x)| \leq |x|^2 \left( R^2 + |x|^2 \right)^{-1/2}.
\end{equation}
Fix constants $0 < \eta < 1$ and $\theta > 1$ to be selected below. By \eqref{Huc} and \eqref{phidcn}, we have
\begin{equation} \label{Ephi}
H(-\theta D\varphi_R,x) \leq C_\theta \quad \mbox{in} \ \Rd.
\end{equation}
Define the function
\begin{equation*}
\hat u : = \left( \lambda + \eta\right) u+ (1-\lambda -\eta) w
\end{equation*}
as well as
\begin{align*}
\hat u_R  : = &\ (1-\eta) \hat u + \eta \theta \varphi_R = (1-\eta)(\lambda+\eta) u + (1-\eta)(1 - \lambda- \eta)w + \eta \theta \varphi_R,
\end{align*}
where $w$ is the function in assumption \eqref{ssass}. By subtracting a constant from $w$, we may assume that $\sup_{D} w = 0$. Since $0<\lambda < 1$, we may select $\eta>0$ small enough, depending only on a positive lower bound for $1-\lambda$, that
\begin{equation} \label{etachoice}
\lambda +\eta < 1 \quad \mbox{and} \quad (1-\eta)(\lambda +\eta) > \lambda.
\end{equation}
Select $\theta : = 1+ \limsup_{|x| \to \infty} u(x) / |x|$, and observe that, by the previous inequality, the sublinearity of $w$ at infinity, $\lambda \in \Lambda$ and the definition of $\varphi_R$, we have, for every $R> 1$,
\begin{align*}
\liminf_{|y| \to \infty} \frac{v(y) - \hat u_R(y)}{|y|}  & \geq \liminf_{|y| \to \infty} \frac{v(y) - \lambda u(y)}{|y|} + \liminf_{|y| \to \infty} \frac{\lambda u - \hat u_R(y)}{|y|} \\
& \geq 0 + \eta \liminf_{|y| \to \infty} \frac{(\eta-(1-\lambda))u(y)  -  \theta \varphi_R(y)}{|y|}\\
& \geq - \eta(1-\lambda-\eta) \theta +\eta\theta \\
& > 0.
\end{align*}
To get a differential inequality for $\hat u_R$, we apply Lemma~\ref{convtrick} twice. The first application, using~\eqref{ssass} and that $u$ is a subsolution of \eqref{Eeq}, yields that $\hat u$ satisfies
\begin{equation*}
H(D\hat u,x) \leq (\lambda+\eta) \mu + (1-\lambda-\eta) \mu_0 \quad \mbox{in} \ \Rd\! \setminus \! D.
\end{equation*}
Combining this with \eqref{Ephi}, we obtain
\begin{equation*}
H(D\hat u_R,y) \leq \widetilde\mu(\eta) \quad \mbox{in} \ \Rd\! \setminus \! D,
\end{equation*}
where the constant $\widetilde\mu(\eta)$ is given by
\begin{equation*}
\widetilde \mu(\eta) := (1-\eta)(\lambda+\eta)\mu + (1-\eta)(1-\lambda-\eta)\mu_0 +  C\eta.
\end{equation*}
Since $\mu > \mu_0$ and $\lambda$, it is possible to select $\eta> 0$ sufficiently small, depending on a positive lower bound for $\lambda$, so that $\widetilde\mu(\eta) < \mu$. The classical comparison principle then applies, yielding
\begin{equation*}
\hat u_R - v \leq \max_{\partial D} (\hat u_R - v) \quad \mbox{in} \ \Rd\!\setminus \! D.
\end{equation*}
Sending $R\to \infty$ and using the fact that $\varphi_R\rightarrow 0$ locally uniformly, we obtain
\begin{equation} \label{hatuvinq}
(1-\eta) \hat u - v  \leq \max_{\partial D} \big( (1-\eta) \hat u - v\big) \quad \mbox{in} \ \Rd\! \setminus\! D.
\end{equation}
Since $w$ is strictly sublinear at infinity, the latter implies
\begin{equation*}
\liminf_{|y| \to \infty} \frac{v(y) - (1-\eta)(\lambda+\eta)u(y)}{|y|} \geq 0.
\end{equation*}
Hence
\begin{equation*}
\overline \lambda \geq (1-\eta)(\lambda+\eta).
\end{equation*}
If $\overline \lambda < 1$, then we may send $\lambda \to \overline \lambda$ while keeping $\eta > 0$ fixed to find that to obtain that $\overline \lambda \geq  (1-\eta)(\overline \lambda +\eta)$, which is a contradiction for small enough $\eta> 0$. It follows that $\overline \lambda =1$.

We therefore have \eqref{hatuvinq} for every $0 < \lambda < \overline \lambda = 1$ and sufficiently small $\eta > 0$, depending on $\lambda$. Sending $\eta \to 0$ and then $\lambda \to 1$ in \eqref{hatuvinq} yields \eqref{metcmpcon}.
\end{proof}

Using Perron's method we show that solutions of \eqref{EeqX} satisfying appropriate growth conditions exist, completing the proof of Proposition~\ref{metexistence}.

\begin{proof}[{Proof of Proposition~\ref{metexistence}}]
Fix $p\in \Rd$, $\mu > \overline H(p)$ and $x_0\in \Rd$. According to \eqref{Hcoer}, for large enough $\alpha > 0$, the function $v(x):= \alpha|x-x_0|$ is a strict supersolution of \eqref{EeqX} in $\Rd \setminus \{ x_0 \}$.

There exists a global subsolution $w$ of $H(p+Dw,y) \leq \overline H(p)$ which is strictly sublinear at infinity and globally Lipschitz. Since $\mu > \overline H(p)$, it follows from \eqref{Huc} that for small enough $\ep > 0$ the function
\begin{equation*}
\hat w(y) = w(y) + \ep (1+|y|^2)^{1/2}
\end{equation*}
satisfies $H(p+D\hat w,y) \leq \mu$. By subtracting a constant we may assume $\hat w(x_0) = 0$. Define
\begin{equation*}
d_{x_0,\mu,p}(x):= \sup\big\{ u(x): u \in \USC(\Rd) \ \mbox{satisfies} \ H(p+Du,x) \leq \mu \ \mbox{in} \ \Rd\setminus \{ x_0 \}, \ \ u \leq v \ \ \mbox{in} \ \Rd \big\}.
\end{equation*}
By \eqref{ssass}, the classical Perron method adapted to viscosity solutions and Proposition~\ref{metcomp}, we have that $d_{x_0,\mu}$ is a solution of \eqref{EeqX}. From $d_{x_0,\mu,p} \geq \hat w$, \eqref{EeqBC} follows. Uniqueness is immediate from Proposition~\ref{metcomp}.
\end{proof}

\begin{remark} \label{redud}
For every $p,q\in \Rd$, $\mu > \max\big\{ \overline H(p), \overline H(q)\big\}$ and $x_0\in \Rd$,
\begin{equation} \label{redudeq}
d_{\mu,{x_0},p} (y) = (q-p)\cdot (y-x_0) + d_{\mu,{x_0},q}(y).
\end{equation}
To see this, notice that the right side of \eqref{redudeq} grows at most linearly at infinity and is a solution of \eqref{EeqX} in $\Rd \setminus \{x_0\}$. Therefore $(q-p)\cdot (y-x_0) + d_{\mu,{x_0},q}(y) \leq d_{\mu,{x_0},p}(y)$ by Proposition~\ref{metcomp}. The reverse inequality is obtained by interchanging $p$ and $q$ and repeating the argument.
\end{remark}

In light of \eqref{redudeq}, the family $\big\{ d_{\mu,x_0,p} : p\in \Rd, \ \mu > \overline H(p) \big\}$ is completely described by the single function $d_{\mu,{x_0},{p_*}}$ for any fixed $p_* \in \argmin \overline H$. This motivates the definition \eqref{distfundef} of the distance functions $d_{\mu,{x_0}}$ given in Section~\ref{bigdist}, which we remark does not depend on the choice of $p_*$.

We conclude with a simple proof the necessity of comparisons with distance functions for absolute minimizers. The argument is based on Proposition~\ref{metcomp}.

\begin{proof}[{Proof of Proposition~\ref{CCprop}}]
Suppose that $u\in C^{0,1}_{\mathrm{loc}}(U)$ is absolutely subminimizing but there exists $V \ll U$, $x_0 \in \Rd \setminus V$, and $\mu > \mu_0$ such that
\begin{equation} \label{violcc}
\sup_{V} \big(u - d_{x_0,\mu}\big) > \sup_{\partial V} \big(u - d_{x_0,\mu}\big).
\end{equation}
By subtracting a constant from $u$, if necessary, we may assume that $u < d_{x_0,\mu}$ on $\partial V$ but $u > d_{x_0,\mu}$ at some point of $V$. Define $W:= \{ x\in V: u(x) > d_{x_0,\mu}(x) \}$. Since $u$ is absolutely subminimizing, it follows that
\begin{equation*}
\esssup_{x\in W} H(Du(x),x) \leq \esssup_{x\in W} H(Dd_{x_0,\mu}(x),x) = \mu.
\end{equation*}
Thus $u$ is a subsolution of \eqref{Eeq} in $W$. Let $\ep > 0$. The function
\begin{equation*}
v(x) : = \begin{cases} \max\{ u(x)-\ep, d_{x_0,\mu}(x) \} & x\in V, \\
d_{x_0,\mu}(x) & x\in \Rd\setminus D.
\end{cases}
\end{equation*}
is a subsolution of \eqref{Eeq} in $\Rd\setminus \{ x_0 \}$. It follows from Proposition~\ref{metcomp} that $v\leq d_{x_0,\mu}$ in $\Rd \setminus \{ x_0 \}$, a contradiction to \eqref{violcc} for $\ep > 0$ small enough.
\end{proof}

\begin{remark}\label{remkey}
In the case that $H$ is independent of $y$, the condition $\mu \geq H(p)$ is sharp for the existence of distance functions. That is, there are no nonnegative solutions of \eqref{Eeq} in $\Rd\setminus \{ 0 \}$, provided $\mu < H(p)$. Indeed, suppose on the contrary that such a function $u$ exists. Then $u$ is Lipschitz by Lemma~\ref{convtrick}, and by considering any point of differentiability, we deduce that $\mu \geq \min H$. Choosing $\mu < \nu < H(p)$, we may apply Proposition~\ref{metcomp} to deduce that
\begin{equation*}
u(y) \leq d_{\nu}(y) - p\cdot y = \max\big\{ q\cdot y\, : \, H(q+p) \leq \nu \big\}.
\end{equation*}
The convexity of the sublevel sets of $H$ and $0 \not \in \{ q\, : \, H(q+p) \leq \mu\}$ yield, via convex separation, a contradiction to $u\geq 0$.
\end{remark}

\section{The macroscopic problem} \label{EH}

The classical method for identifying the effective equation in the homogenization of Hamilton-Jacobi equations begins with the consideration of the \emph{macroscopic problem}
\begin{equation} \label{mac}
\delta v^\delta + H(p+Dv^\delta, y, \omega) = 0 \quad \mbox{in} \ \Rd.
\end{equation}
Here $\delta > 0$ and $p\in \Rd$ are fixed. We will see shortly that \eqref{mac} has a unique bounded solution $v^\delta = v^\delta(y,\omega;p)$ which is globally Lipschitz continuous. The functions $v^\delta$ are sometimes called \emph{approximate correctors}, and, in the context of periodic homogenization, \eqref{mac} approximates the cell problem. The effective Hamiltonian is typically constructed as a limit (in an appropriate sense), as $\delta \to 0$, of $-\delta v^\delta(0,\omega;p)$, which is shown to have a limit with the help of the ergodic theorem.

The next proposition establishes the well-posedness of \eqref{mac}. Since it is well-known, we merely sketch the proof. Further details may be found for example in \cite{CIL}.

\begin{prop} \label{mac-wp}
For each $\delta > 0$, $p\in \Rd$ and $\omega \in \Omega$, there exists a unique bounded solution $v^\delta = v^\delta(\cdot,\omega;p)$ of \eqref{mac}. Moreover, the map $(y,\omega) \mapsto v^\delta(y,\omega;p)$ is stationary, and there exists a constant $C = C(|p|)>0$ such that
\begin{equation} \label{vdelest}
\sup_{(y,\omega) \in \Rd\times \Omega} \Big( \big| \delta  v^\delta(y,\omega;p) \big| + \big| Dv^\delta(y,\omega;p)\big| \Big) \leq C.
\end{equation}
\end{prop}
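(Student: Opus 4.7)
The plan is to follow the standard viscosity-solution scheme for first-order equations with a contractive zeroth-order term. First I would prove existence by Perron's method: since $H(p,\cdot,\omega)$ is uniformly bounded by \eqref{Hec}, the constants $\pm \delta^{-1}\sup_{y,\omega}|H(p,y,\omega)|$ are respectively a supersolution and a subsolution of \eqref{mac}, so the Perron envelope of bounded subsolutions lying below the supersolution produces a bounded solution. The comparison principle required to run Perron is the classical one for $\delta v + H(p+Dv,y,\omega) = 0$, whose proof uses doubling of variables together with the strict monotonicity in the $v$-variable (the $\delta > 0$ term), and which applies directly to bounded sub/supersolutions without any assumption at infinity.

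Next I would derive the estimates in \eqref{vdelest}. The bound on $\delta v^\delta$ is immediate from the comparison of $v^\delta$ with the constant sub/supersolutions above, which gives $\|\delta v^\delta\|_{L^\infty} \leq \sup_{y,\omega}|H(p,y,\omega)| \leq C(|p|)$ thanks to \eqref{Hec}. With this in hand, $v^\delta$ is a subsolution of
\begin{equation*}
H(p+Dv^\delta,y,\omega) \leq C(|p|) \quad \text{in } \mathbb{R}^n.
\end{equation*}
Invoking Lemma~\ref{convtrick}(i), the coercivity \eqref{Hcoe} then yields a uniform Lipschitz bound $\|Dv^\delta\|_{L^\infty} \leq C(|p|)$, completing \eqref{vdelest}.

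For stationarity I would use uniqueness together with the stationarity \eqref{Hstat}. Fix $z \in \mathbb{R}^n$ and consider the function $w(y) := v^\delta(y+z,\omega;p)$. Then $w$ is bounded and, using \eqref{Hstat},
\begin{equation*}
\delta w(y) + H(p+Dw(y),y,\tau_z\omega) = \delta v^\delta(y+z,\omega;p) + H(p+Dv^\delta(y+z,\omega;p),y+z,\omega) = 0,
\end{equation*}
so $w$ solves \eqref{mac} for the environment $\tau_z\omega$. Uniqueness forces $w(y) = v^\delta(y,\tau_z\omega;p)$, which is precisely the stationarity relation $v^\delta(y+z,\omega;p) = v^\delta(y,\tau_z\omega;p)$.

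I expect no serious obstacle: the ingredients are the comparison principle (using the $\delta v$ term), \eqref{Hec} for the sup-bound, and coercivity \eqref{Hcoe} combined with Lemma~\ref{convtrick} for the gradient bound. The only mildly delicate point is handling the measurability of $\omega \mapsto v^\delta(y,\omega;p)$, which follows from the stability of the Perron construction under monotone limits of measurable families of sub/supersolutions; this is a routine verification that I would relegate to a remark rather than carry out in detail, consistent with the paper's stated intention to merely sketch the proof.
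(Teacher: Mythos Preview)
Your proposal is correct and follows essentially the same approach as the paper: construct constant sub/supersolutions from \eqref{Hec}, apply Perron's method together with the classical comparison principle (which the $\delta v$ term makes available), read off the bound on $|\delta v^\delta|$ from the barriers, obtain the Lipschitz estimate from coercivity via Lemma~\ref{convtrick}, and deduce stationarity from uniqueness and \eqref{Hstat}. The only cosmetic difference is that the paper uses the slightly sharper barriers $-\delta^{-1}\inf_y H(p,y,\omega)$ and $-\delta^{-1}\sup_y H(p,y,\omega)$ in place of your symmetric $\pm\delta^{-1}\sup_{y,\omega}|H(p,y,\omega)|$, but this changes nothing of substance.
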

\begin{proof}
For $\delta > 0$, the classical comparison principle applies to \eqref{mac}, allowing us to compare subsolutions and supersolutions which separate at most strictly sublinearly at infinity. According to \eqref{Hec}, the constant (in $y$) functions $-C_1(p,\omega)/ \delta$ and $-C_2(p,\omega)/\delta$ are a supersolution and subsolution of \eqref{mac}, respectively, where
\begin{equation} \label{Cidef}
C_1(p,\omega):= \inf_{y \in \Rd} H(p,y,\omega) \leq \sup_{y \in \Rd} H(p,y,\omega) =: C_2(p,\omega).
\end{equation}
Notice that, by \eqref{Hec}, we have $|C_1(p,\omega)| + |C_2(p,\omega)| \leq C(|p|)$, and, in view of \eqref{Hcoe} and \eqref{Hec}, for each $\omega\in \Omega$, we find
\begin{equation} \label{C1coe}
C \leq C_1(p,\omega) \rightarrow +\infty \quad \mbox{as} \ \ |p| \to \infty.
\end{equation}

The Perron method now provides the existence of a solution $v^\delta$ of \eqref{mac}, satisfying
\begin{equation} \label{vdelCi}
C_1(p,\omega) \leq -\delta v^\delta(y,\omega;p) \leq C_2(p,\omega),
\end{equation}
which implies $\big| \delta v^\delta \big| \leq C(|p|)$. By comparison, $v^\delta$ is the unique solution which grows at most sublinearly at infinity. By uniqueness and \eqref{Hstat}, $v^\delta$ is stationary. Using the equation, the bound for $\big| \delta v^\delta \big|$, \eqref{Hcoe} and Lemma~\ref{convtrick} yield that $\big| Dv^\delta \big|$ is uniformly bounded.
\end{proof}

Our goal is to characterize, in the limit $\delta \to 0$, the behavior of the functions $\delta v^\delta(\cdot,\cdot;p)$ on a set of full probability and simultaneously for all $p\in \Rd$. To accomplish this, we typically characterize the limit for each fixed $p \in \Q^\d$ and then take the (countable) intersection of the resulting subsets of $\Omega$. To conclude, we need a continuous dependence estimate. This is the purpose of the next lemma.

\begin{lem} \label{CDE-noxd}
For each $R> 0$, there exists a constant $C_R$ such that, for every $\delta > 0$, $y\in \Rd$, $p,q\in B_R$, and $\omega \in \Omega$,
\begin{equation} \label{CDEeq}
\big| \delta v^\delta(y,\omega;p) - \delta v^\delta(y,\omega;q) \big| \leq C_R|p-q|.
\end{equation}
\end{lem}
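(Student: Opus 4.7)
The plan is to exploit the convexity of $H$ in $p$ to show that, for each fixed $y \in \Rd$ and $\omega \in \Omega$, the map $p \mapsto v^\delta(y,\omega;p)$ is \emph{concave} on $\Rd$. Once concavity is established, the Lipschitz bound on $B_R$ follows from the uniform bound $|\delta v^\delta(\cdot,\omega;p)| \leq C(|p|)$ provided by Proposition~\ref{mac-wp}, via the standard observation that a bounded concave function is locally Lipschitz in the interior.

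To prove concavity, fix $p_1,p_2 \in \Rd$ and $\lambda \in [0,1]$. Set $v_i := v^\delta(\cdot,\omega;p_i)$, $\bar p := \lambda p_1 + (1-\lambda) p_2$ and $\bar v := \lambda v_1 + (1-\lambda) v_2$. Both $v_i$ are Lipschitz by Proposition~\ref{mac-wp}, so $\bar v$ is Lipschitz as well. Using the convexity hypothesis \eqref{Hconv} and the equations $\delta v_i + H(p_i + Dv_i,y,\omega) = 0$, a pointwise (Rademacher) calculation gives
\begin{equation*}
\delta \bar v + H(\bar p + D\bar v,y,\omega) \leq \lambda\bigl[\delta v_1 + H(p_1 + Dv_1,y,\omega)\bigr] + (1-\lambda)\bigl[\delta v_2 + H(p_2 + Dv_2,y,\omega)\bigr] = 0 \quad \text{a.e. in } \Rd.
\end{equation*}
By Lemma~\ref{convtrick}(ii), the inequality holds in the viscosity sense, so $\bar v$ is a bounded viscosity subsolution of $\delta w + H(\bar p + Dw,y,\omega) = 0$. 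The classical comparison principle invoked in the proof of Proposition~\ref{mac-wp} then yields $\bar v \leq v^\delta(\cdot,\omega;\bar p)$, which is exactly the concavity of $v^\delta$ in $p$.

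To conclude, fix $y \in \Rd$ and $\omega \in \Omega$ and set $f(p) := \delta v^\delta(y,\omega;p)$. Applying Proposition~\ref{mac-wp} with radius $2R$ in place of $R$, we have $|f(p)| \leq C_{2R}$ for all $p \in B_{2R}$, with $C_{2R}$ depending only on $R$. The function $f$ is concave on $\Rd$ by the previous step, so for any $p,q \in B_R$ we may extend the segment from $p$ to $q$ to a chord of $B_{2R}$; the monotonicity of difference quotients along the chord (both endpoints of which are at distance at least $R$ from $\{p,q\}$) yields
\begin{equation*}
|f(p) - f(q)| \leq \frac{2C_{2R}}{R}\,|p-q|,
\end{equation*}
which is precisely \eqref{CDEeq} with $C_R := 2C_{2R}/R$. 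The only subtlety worth flagging is the viscosity/a.e.\ passage in the concavity argument, which is the reason we needed \eqref{Hconv}; this is handled cleanly by Lemma~\ref{convtrick}(ii) and is really the only place where the convexity assumption on $H$ enters the proof.
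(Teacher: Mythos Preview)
Your proof is correct, but it proceeds by a genuinely different route from the paper's. The paper constructs, for $|p-q|<1$, an explicit subsolution
\[
u := (1-|p-q|)\,v^\delta(\cdot,\omega;q) - \delta^{-1}k
\]
of the equation with parameter $p$, where $k$ is chosen proportional to $|p-q|$ using the bound $\sup_{B_{R+1}\times\Rd} H < \infty$; comparison gives one inequality and symmetry the other. You instead first prove that $p\mapsto v^\delta(y,\omega;p)$ is \emph{concave} and then read off the local Lipschitz bound from the uniform $L^\infty$ estimate on $B_{2R}$. Your concavity step is exactly the content of the (unnumbered) lemma the paper states \emph{after} Lemma~\ref{CDE-noxd}, so in effect you have merged the paper's two lemmas into one argument. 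This is arguably cleaner conceptually; the paper's direct construction has the small advantage of not needing to enlarge the ball to $B_{2R}$ and giving a slightly more explicit constant. One nitpick: in your pointwise calculation you invoke ``the equations $\delta v_i + H(\cdots)=0$'' at points of differentiability, but strictly speaking Lemma~\ref{convtrick}(ii) only gives you the \emph{subsolution} inequality a.e.; fortunately that is all you use.
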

\begin{proof}
We may assume that $|p-q| < 1$. Fix $k> 0$ to be selected and consider the function
\begin{equation*}
u(x): = (1-|p-q|) v^\delta(y,\omega;q) - \delta^{-1} k.
\end{equation*}
Using the convexity of $H$, we find that, formally,
\begin{align*}
\delta u + H(p+Du,y,\omega) & \leq -k+|p-q| H\Big(\frac{p-q+|p-q|q}{|p-q|},y,\omega\Big) \\
& \leq -k + |p-q| \sup_{B_{R+1}\times \Rd} H(\cdot,\cdot,\omega).
\end{align*}
Choosing $k:= |p-q| \sup_{B_{R+1}\times \Rd} H(\cdot,\cdot,\omega)$, find that $u$ is formally a subsolution of \eqref{mac}. This calculation is made rigorous with the help of Lemma~\ref{convtrick}, and so we deduce that $u(y) \leq v^\delta(y,\omega;p)$. This yields, with the help of \eqref{vdelest},
\begin{equation*}
 \delta v^\delta(y,\omega;q) -  \delta v^\delta(y,\omega;p) \leq k + C(|q|) |p-q| \leq C(|q|) |p-q|.
\end{equation*}
Repeating the argument with the roles of $p$ and $q$ reversed yields \eqref{CDEeq}.
\end{proof}

\begin{lem}
For every $\delta > 0$, $y\in \Rd$, $p,q\in B_R$, and $\omega \in \Omega$,
\begin{equation} \label{vdelconv}
 \frac12  v^\delta(y,\omega;p) + \frac12  v^\delta(y,\omega;p) \leq   v^\delta(y,\omega;\tfrac12 p+\tfrac12 q).
\end{equation}
\end{lem}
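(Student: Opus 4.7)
The inequality asserts concavity of $p \mapsto v^\delta(y,\omega;p)$, and as is standard for Hamilton-Jacobi equations with convex Hamiltonians, the natural route is to exhibit the average
\[
w(y) := \tfrac12 v^\delta(y,\omega;p) + \tfrac12 v^\delta(y,\omega;q)
\]
as a subsolution of the equation solved by $v^\delta(\cdot,\omega;\bar p)$, with $\bar p := \tfrac12 p + \tfrac12 q$, and then to invoke comparison. This is the entire strategy: produce the differential inequality by pointwise convex combination of the two equations, then compare.

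\emph{Step 1: a.e.\ differential inequality.} Write $v_1 := v^\delta(\cdot,\omega;p)$ and $v_2 := v^\delta(\cdot,\omega;q)$. By Proposition~\ref{mac-wp}, both $v_1$ and $v_2$ are bounded and Lipschitz continuous, and by Lemma~\ref{convtrick}(ii) each satisfies its equation a.e.\ in $\Rd$. At any point $y$ where both $v_1$ and $v_2$ are differentiable, averaging the two equalities gives
\[
\delta w(y) + \tfrac12 H(p+Dv_1(y),y,\omega) + \tfrac12 H(q+Dv_2(y),y,\omega) = 0.
\]
The convexity hypothesis \eqref{Hconv}, together with $\bar p + Dw(y) = \tfrac12(p+Dv_1(y)) + \tfrac12(q+Dv_2(y))$, then yields
\[
\delta w(y) + H\bigl(\bar p + Dw(y),y,\omega\bigr) \leq 0 \quad \text{a.e.\ in } \Rd.
\]

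\emph{Step 2: pass to the viscosity sense and compare.} Since $w$ is Lipschitz, the other direction of Lemma~\ref{convtrick}(ii) upgrades the a.e.\ inequality to the viscosity inequality
\[
\delta w + H(\bar p + Dw, y, \omega) \leq 0 \quad \text{in } \Rd.
\]
Both $w$ and $v^\delta(\cdot,\omega;\bar p)$ are bounded, so they separate (at most) strictly sublinearly at infinity, and the classical comparison principle for \eqref{mac} (invoked in the proof of Proposition~\ref{mac-wp}) applies to give $w \leq v^\delta(\cdot,\omega;\bar p)$ in $\Rd$. This is exactly \eqref{vdelconv}.

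\emph{Main obstacle.} The only non-trivial point is Step 1, where the convex combination of equations is performed at points where the viscosity solutions may not be classically differentiable. This is precisely the reason Lemma~\ref{convtrick}(ii) was recorded in the preliminaries: convexity of $H$ in $p$ lets us replace the viscosity interpretation with the a.e.\ interpretation, take the convex combination pointwise, and then return. No further ideas are needed.
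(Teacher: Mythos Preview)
Your proof is correct and follows essentially the same route as the paper's: show that the average $w=\tfrac12 v^\delta(\cdot,\omega;p)+\tfrac12 v^\delta(\cdot,\omega;q)$ is a viscosity subsolution of the macroscopic problem at $\bar p=\tfrac12 p+\tfrac12 q$ by convexity of $H$ (made rigorous via Lemma~\ref{convtrick}(ii)), and then apply the comparison principle for bounded sub/supersolutions of \eqref{mac}. The paper merely compresses your Steps~1--2 into the single sentence ``the convexity of $H$ implies that the left side of \eqref{vdelconv} is a subsolution,'' but the mechanism is identical.
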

\begin{proof}
Fix $\omega$. The convexity of $H$ implies that the left side of \eqref{vdelconv} is a subsolution of
\begin{equation*}
\delta u + H\big( \tfrac12 p + \tfrac12 q + Du, y, \omega\big) \leq 0 \quad \mbox{in} \ \Rd.
\end{equation*}
Therefore by the comparison principle, $u(\cdot) \leq v^\delta (\cdot,\omega;\tfrac12 p+\tfrac12 q)$.
\end{proof}

The next proposition, which plays an important role in the proof of Theorem~\ref{main}, characterizes the limit of $-\delta v^\delta$. The argument first appeared in \cite{LS} in the context of the homogenization of Hamilton-Jacobi equations, and we give a full proof here for completeness. The idea is to exploit the convexity of $H$ to find a \emph{subcorrector} $w$ which has a stationary gradient and a minimal constant $\widetilde H(p)$ such that
\begin{equation*}
H(p+Dw,y,\omega) \leq \widetilde H(p) \quad \mbox{in} \ \Rd.
\end{equation*}
This permits us to apply one-sided comparison arguments which are enough to conclude that $-\delta v^\delta(0,\omega;p)$ converges to the constant $\widetilde H(p)$ in probability. In the next section we prove that $\widetilde H(p) = \overline H(p)$, a fact not immediately obvious since $\overline H$ is defined ``$\omega$-by-$\omega$" while $\widetilde H$ is defined in terms of stationary functions.

We remark that obtaining the almost sure convergence of $-\delta v^\delta(0,\omega;p)$ to $\widetilde H(p)$ is considerably more involved; see \cite{ASo} for a more detailed overview and further discussion.

\begin{prop} \label{mainstep}
There exists $\widetilde H:\Rd\to \R$ which is continuous, convex, and coercive, such that, for every $R> 0$ and $p \in \Rd$,
\begin{equation} \label{mainstepeq}
\lim_{\delta \to 0} \E \Big[ \, \sup_{y\in B_{R/\delta}} \left| \delta v^\delta (y,\omega;p) + \widetilde H(p)\right| \Big]=0.
\end{equation}
Moreover, there exists a subset $\Omega_2 \subseteq \Omega$ of full probability such that, for every $\omega \in \Omega_2$,
\begin{equation} \label{oneside}
\liminf_{\delta \to 0} \delta v^\delta(0,\omega) = -\widetilde H(p).
\end{equation}
\end{prop}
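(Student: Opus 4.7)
The plan is to introduce the effective Hamiltonian through an infimum-over-subcorrectors formula and then identify it as the large-scale limit of $-\delta v^\delta$. The upper bound on $-\delta v^\delta$ will follow from a comparison argument driven by a subcorrector, while the matching lower bound is the main obstacle and will be resolved through a convexity-plus-weak-compactness argument in the spirit of~\cite{LS}.

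Define
\[ \widetilde H(p) := \inf\bigl\{\mu \in \R : \exists\, w\colon \Rd\times\Omega\to\R \text{ with } Dw \text{ stationary}, \E[Dw(0,\cdot)]=0 \text{ and } H(p+Dw,y,\omega)\leq\mu \text{ a.s.}\bigr\}. \]
The choice $w\equiv 0$ together with \eqref{Hec} gives $\widetilde H(p)<\infty$; the mean-zero condition $\E[p+Dw]=p$ combined with \eqref{Hcoe} shows $\widetilde H$ is coercive; convex combinations of subcorrectors together with \eqref{Hconv} yield convexity, and hence continuity of $\widetilde H$ on $\Rd$. For the upper bound, fix $\mu > \widetilde H(p)$ and pick a subcorrector $w$ at level $\mu$. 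The coercivity \eqref{Hcoe} of $H$ and the bound $H(p+Dw,\cdot,\cdot)\leq\mu$ force $Dw\in L^\infty$, so Lemma~\ref{koz} gives $w(y,\omega)/|y|\to 0$ as $|y|\to\infty$ a.s., and hence $\sup_{B_{R/\delta}}|\delta w|\to 0$ as $\delta\to 0$ for each fixed $R$. A standard comparison argument (treating $\delta w$ as a vanishing perturbation and using \eqref{vdelest} for far-field control) then yields
\[ -\delta v^\delta(y,\omega;p) \leq \mu + o(1) \quad \text{uniformly for } y\in B_{R/\delta}, \text{ a.s.\ as } \delta\to 0. \]
Letting $\mu\downarrow\widetilde H(p)$ provides $\limsup_{\delta\to 0}\sup_{y\in B_{R/\delta}}\bigl(-\delta v^\delta(y,\omega;p)\bigr)\leq \widetilde H(p)$ a.s., which in particular gives the ``$\geq$'' half of \eqref{oneside}.

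The matching lower bound, $\limsup_{\delta\to 0}(-\delta v^\delta(0,\cdot;p)) \geq \widetilde H(p)$ a.s., is the main obstacle. I would argue by contradiction: if $\limsup_{\delta\to 0}(-\delta v^\delta(0,\omega;p)) < \widetilde H(p)-\eta$ on a set of positive probability for some $\eta>0$, then the goal is to construct a subcorrector for $p$ at a level strictly below $\widetilde H(p)$, violating its definition. By \eqref{vdelest}, the processes $\{Dv^\delta\}_{\delta>0}$ are uniformly $L^\infty$-bounded and stationary, so along a subsequence $\delta_k\to 0$ we have $Dv^{\delta_k}\rightharpoonup\Phi$ weakly-$*$ for some stationary $\Phi$ with $\E\Phi=0$. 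The crux is passing to the limit in the nonlinear term $H(p+Dv^{\delta_k},\cdot,\cdot)$: weak convergence alone is insufficient, and this is precisely where the convexity \eqref{Hconv} of $H$ together with Lemma~\ref{meastheor} --- which upgrades the weak convergence to convergence in measure for the relevant $L^1$ quantities --- plays its essential role. Combined with the identity $H(p+Dv^{\delta_k},y,\omega) = -\delta_k v^{\delta_k}(y,\omega)\leq \widetilde H(p)-\eta/2$, this identifies $\Phi = Dw_*$ for a stationary-gradient $w_*$ (sublinear by Kozlov) satisfying $H(p+Dw_*,y,\omega)\leq \widetilde H(p)-\eta/2$ a.s., the desired forbidden subcorrector. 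Combining both bounds yields $-\delta v^\delta(0,\cdot;p)\to \widetilde H(p)$ in probability; dominated convergence via \eqref{vdelest} upgrades this to $L^1(\Omega)$ convergence, and the uniform-in-$B_{R/\delta}$ upper bound above extends it to \eqref{mainstepeq}. Finally, extracting an a.s.\ convergent subsequence from the $L^1$ convergence and combining with the a.s.\ inequality $\liminf_{\delta\to 0}\delta v^\delta(0,\omega;p)\geq -\widetilde H(p)$ from the upper bound gives \eqref{oneside}.
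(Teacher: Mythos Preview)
Your approach is essentially the paper's, reorganized: you define $\widetilde H$ as an infimum over stationary-gradient subcorrectors and then prove matching bounds, whereas the paper defines $\widetilde H$ as a weak-$*$ cluster point of $-\delta_j v^{\delta_j}(0,\cdot)$, builds the subcorrector from that same weak limit (Step~1), and then runs the comparison (Step~2). The ingredients --- weak compactness of $Dv^\delta$, convexity of $H$ to pass to the limit, comparison against a sublinear subcorrector, and Lemma~\ref{meastheor} --- are identical.

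There is, however, a genuine gap in your passage to $L^1$ convergence, together with a misattribution of Lemma~\ref{meastheor}. First, passing to the weak limit in $H(p+Dv^{\delta_k},\cdot,\cdot)$ does \emph{not} require Lemma~\ref{meastheor}; convexity~\eqref{Hconv} alone (via weak lower semicontinuity, or as the paper does, via the equivalence of viscosity and distributional subsolutions for convex first-order operators) suffices to obtain $H(p+Dw_*,y,\omega)\le a$ where $a$ is the weak cluster point of $-\delta_k v^{\delta_k}$. Second, your ``both bounds'' as stated only yield $\limsup_{\delta\to 0}(-\delta v^\delta(0,\omega))=\widetilde H(p)$ a.s., which is exactly~\eqref{oneside} but is \emph{not} convergence in probability, and dominated convergence cannot be invoked without a.s.\ convergence in hand. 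The correct route is already latent in your lower-bound argument: what it really proves is that \emph{every} weak-$*$ cluster point $a$ of $-\delta v^\delta(0,\cdot)$ (necessarily a constant, by ergodicity and~\eqref{vdelest}) admits a stationary subcorrector at level $a$, whence $a\ge\widetilde H(p)$; combined with $a\le \E[\limsup(-\delta v^\delta)]\le\widetilde H(p)$ from your upper bound and reverse Fatou, every cluster point equals $\widetilde H(p)$, so the full family converges weakly. It is \emph{here} that Lemma~\ref{meastheor} enters: setting $f_\delta=\delta v^\delta(0,\cdot)$, the upper bound gives $\liminf f_\delta\ge -\widetilde H(p)$ a.s., Fatou forces equality, hence $f_\delta\rightharpoonup\liminf f_\delta$ and the lemma delivers $L^1$ convergence. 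The extension to $B_{R/\delta}$ then proceeds as you indicate (and as in the paper's Step~3) via the Lipschitz estimate~\eqref{vdelest} and a covering argument.
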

\begin{proof}
The local Lipschitz continuity of $\widetilde H$ follows from Lemma~\ref{CDE-noxd}, once we have shown \eqref{mainstepeq}. Likewise, the coercivity and convexity follow from \eqref{C1coe}, \eqref{vdelCi} and \eqref{vdelconv}. We fix $p\in \Rd$, and omit all dependence of $p$. The proof is divided into three steps.

\emph{Step 1: Construction of the subcorrector.}
For each $\delta > 0$, define
\begin{equation*}
w^\delta(y,\omega) := v^\delta(y,\omega) - v^\delta(0,\omega).
\end{equation*}
According to \eqref{vdelest}, there exists a subsequence $\delta_j \to 0$, a random variable $\widetilde H = \widetilde H(p,\omega) \in \R$, a function $w\in L^\infty(\Rd\times \Omega)$ and a field $\Phi \in L^\infty(\Rd\times\Omega ; \Rd)$ such that, for every $R> 0$, we have the following limits as $j\to \infty$:
\begin{equation} \label{weaklims}
\left\{ \begin{aligned}
& -\delta_j v^{\delta_j}(0,\cdot ) \rightharpoonup \widetilde H(p,\cdot) \quad \mbox{weakly-}\!\ast \ \mbox{in} \ L^\infty(\Omega), \\
& w^{\delta_j} \rightharpoonup w \quad \mbox{weakly-}\!\ast \ \mbox{in} \ L^\infty(B_R\times\Omega), \\
& Dv^{\delta_j} \rightharpoonup \Phi \quad \mbox{weakly-}\!\ast \ \mbox{in}  \ L^{\infty}(B_R\times\Omega;\Rd).
\end{aligned} \right.
\end{equation}
The stationarity of the functions $v^{\delta_j}$, the ergodicity hypothesis and the Lipschitz estimate \eqref{vdelest} imply that $\widetilde H$ is independent of $\omega$, i.e., $\widetilde H(p,\omega) = \widetilde H(p)$ a.s. in $\omega$. Indeed, it suffices to check that, for each $\mu \in \R$, the event $\{ \omega \in \Omega : \widetilde H(p,\omega) \geq \mu \}$ is invariant under $\tau_y$, which follows immediately from \eqref{vdelest}.

The vector field $\Phi$ inherits stationarity from the sequence $Dv^{\delta_j}$ and is gradient-like in the sense that, for every compactly-supported smooth test function $\psi=\psi(y)$,
\begin{equation*}
\int_{\Rd} \left( \Phi^i(y,\omega) \psi_{y_j} (y) - \Phi^j(y,\omega) \psi_{y_i}(y) \right) \, dy = 0 \quad \mbox{a.s. in} \ \omega.
\end{equation*}
It follows from Lemma~\ref{koz} that $\Phi = Dw$, a.s. in $\omega$, in the sense of distributions and, moreover, that $w(\cdot,\omega)$ is globally Lipschitz a.s. in $\omega$.

The convexity hypothesis \eqref{Hconv} and the equivalence of distributional and viscosity solutions for linear inequalities (c.f. Ishii~\cite{I}) allow us to pass to weak limits in \eqref{mac} obtaining that $w(\cdot,\omega)$ is a viscosity solution, a.s. in $\omega$, of
\begin{equation} \label{subcorreq}
 H(p+Dw, y, \omega) \leq \widetilde H(p) \quad \mbox{in} \ \Rd.
\end{equation}
By Lemma~\ref{koz} and
\begin{equation*}
\E\Phi(0,\cdot) = \lim_{j\to \infty} \E Dv^{\delta_j}(0,\cdot) = 0,
\end{equation*}
we have that $w$ is strictly sublinear at infinity, that is,
\begin{equation} \label{sublininfty}
\lim_{|y|\to\infty} |y|^{-1}w(y,\omega) = 0 \quad \mbox{a.s. in} \ \omega.
\end{equation}

\emph{Step 2: Show that $\widetilde H$ characterizes the full limit of $\delta v^\delta(0,\omega)$ in $L^1(\Omega,\mathbb P)$.}
We first prove
\begin{equation} \label{cmpineq}
-\widetilde H(p) \leq \liminf_{\delta \to 0} \delta v^\delta(0,\omega) \quad \mbox{a.s. in} \ \omega.
\end{equation}
This is done via a one-sided comparison argument, using the subcorrector $w$ to bound $v^\delta$ from below. Let $\Omega_1$ be a subset of $\Omega$ with $\Prob[\Omega_1] = 1$ such that for every $\omega \in \Omega_1$, we have $\widetilde H(p,\omega) = \widetilde H(p)$ as well as \eqref{subcorreq} and \eqref{sublininfty}.

Fix $\omega\in \Omega_1$ and a small constant $\eta > 0$. We allow the constants introduced immediately below to depend on $\omega$. Define $\varphi(y):= -(1+|y|^2)^{1/2}$, and notice that \eqref{Hec} implies
\begin{equation} \label{msphibnd}
H(p+D\varphi,y,\omega) \leq C.
\end{equation}
For each $\delta > 0$, define the function
\begin{equation*}
w^\delta(y) : = (1-\ep) \!\left( w(y,\omega) - ( \widetilde H + \eta ) \delta^{-1} \right) + \ep \varphi(y),
\end{equation*}
where $\ep > 0$ will be chosen below in terms of $\eta$. We proceed by comparing $w^\delta$ and $v^\delta$ in the limit as $\delta \to 0$. Assuming that $w$ is smooth, we have, in view of \eqref{subcorreq}, \eqref{msphibnd} and the convexity of $H$,
\begin{equation}
\delta w^\delta+ H(p+Dw^\delta,y)  \leq \delta w^\delta + (1-\ep) \widetilde H + C \ep.
\label{delwdel}
\end{equation}
In the case that $w$ is not smooth, we verify \eqref{delwdel} in the viscosity sense either by using that $\varphi$ is smooth, or by appealing to Lemma~\ref{convtrick}. According to \eqref{sublininfty},
\begin{equation*}
\sup_{B_R} w \leq C_\eta + \eta^3 R,
\end{equation*}
and so, by choosing $\ep = \min\{ 1/4, \eta/4C\}$ with $C$ is as in \eqref{delwdel}, we may estimate the right side of \eqref{delwdel} by
\begin{equation*}
\delta w^\delta + (1-\ep) \widetilde H + C \ep = (1-\ep) (\delta w-\eta) + C\ep \leq \delta C_{\eta} + \delta \eta^3 R - \frac12 \eta \quad \mbox{in} \ B_R.
\end{equation*}
Next we observe that the bound on $\big|\delta v^\delta\big|$ in \eqref{vdelest} and the definition of $w^\delta$ imply
\begin{equation*}
w^\delta - v^\delta \leq (1-\ep) w +C \delta^{-1} - c\eta R \quad \mbox{on} \ \partial B_R.
\end{equation*}
Therefore by taking $R : = C(\delta\eta)^{-1}$ for a large constant $C> 0$, we have for all sufficiently small $\delta > 0$, depending on both $\omega$ and $\eta$,
\begin{equation}
\left\{ \begin{aligned}
& \delta w^\delta + H(p+Dw^\delta,x,y,\omega) \leq 0 & \mbox{in} & \ B_R, \\
& w^\delta \leq v^\delta & \mbox{on} & \ \partial B_R.
\end{aligned} \right.
\end{equation}
The comparison principle yields that $w^\delta(\cdot) \leq v^\delta(\cdot,\omega)$ in $B_R$. In particular we deduce that $w^\delta(0) \leq v^\delta(0,\omega)$. Multiplying this inequality by $\delta$ and sending $\delta \to 0$ yields
\begin{equation*}
-\widetilde H -\eta \leq ( 1 - C \eta)^{-1} \liminf_{\delta \to 0} \delta v^\delta (0,\omega).
\end{equation*}
Disposing of $\eta > 0$, we have \eqref{cmpineq} for all $\omega\in \Omega_1$.

Since $-\widetilde H$ is the weak limit of the subsequence $\delta_j v^{\delta_j}(0,\cdot)$, the reverse of \eqref{cmpineq} is immediate and we obtain \eqref{oneside} for an event $\Omega_2 \subseteq \Omega$ of full probability. Furthermore, it follows from this that the full sequence $\delta v^\delta(0,\cdot)$ converges weakly-$\ast$ to $-\widetilde H$, that is, as $\delta \to 0$,
\begin{equation*}
\delta v^\delta (0,\cdot) \rightharpoonup -\widetilde H  \quad \mbox{weakly-}\!\ast \ \mbox{in} \ L^\infty(\Omega).
\end{equation*}
An application of Lemma~\ref{meastheor} yields
\begin{equation}\label{convprob}
\lim_{\delta \to 0} \E \big| \delta v^\delta(0,\cdot) + \widetilde H \big| =0.
\end{equation}

\emph{Step 3: Improve \eqref{convprob} to balls of radius $\sim1/\delta$.} We show that, for each $R> 0$,
\begin{equation} \label{convprob1del}
\lim_{\delta\to0} \E \bigg[ \sup_{y\in B_{R/\delta}} \big| \delta v^\delta(0,\cdot) + \widetilde H \big| \bigg] = 0.
\end{equation}
Fix $R > 0$. Let $\rho > 0$ and select points $y_1,\ldots,y_k \in B_R$ such that
\begin{equation*}
B_R \subseteq \bigcup_{i=1}^k B(y_i,\rho) \quad \mbox{and} \quad k \leq C \bigg( \frac{R}{\rho}\bigg)^\d.
\end{equation*}
Using \eqref{vdelest}, we find
\begin{align*}
\lefteqn{ \limsup_{\delta \to 0} \E \bigg[ \sup_{y\in B_{R/\delta}} \big| \delta v^\delta(0,\cdot) + \widetilde H \big| \bigg]} \qquad & \\
& \leq \sum_{i=1}^k \limsup_{\delta \to 0} \E \big| \delta v^\delta(y_i/\delta ,\cdot) + \widetilde H \big| + \limsup_{\delta\to 0} \E\bigg[  \max_{1\leq i \leq k} \osc_{z \in B(y_i/\delta, \rho/\delta)} \delta v^\delta(z,\cdot) \bigg] \\
& \leq k\limsup_{\delta\to 0}\E \big| \delta v^\delta(0,\cdot) + \widetilde H \big| + C\rho \\
& =  C\rho.
\end{align*}
Disposing of $\rho>0$ yields \eqref{convprob1del}.
\end{proof}

\begin{remark} \label{assubseq}
By \eqref{mainstepeq}, we can find a subsequence $\delta_j \to 0$ so that, a.s. in $\omega$,
\begin{equation}\label{assubseqeqp}
\lim_{j \to \infty} \sup_{y\in B_{R/\delta_j}} \left| \delta_j v^{\delta_j} (y,\omega;p) + \widetilde H(p)\right| =0.
\end{equation}
Using \eqref{assubseqeqp} and a diagonalization procedure, and by intersecting the relevant subsets of $\Omega$, we deduce that the existence of a subsequence $\delta_j \to 0$ along which
\begin{equation}
\label{assubseqeq}
\lim_{j \to \infty} \sup_{y\in B_{R/\delta_j}} \left| \delta_j v^{\delta_j} (y,\omega;p) + \widetilde H(p)\right| =0.
\end{equation}
holds, for every $R> 0$ and rational $p\in\Q^\d$, on a single event $\Omega_0$ of full probability. Using Lemma~\ref{CDE-noxd}, we deduce that \eqref{assubseqeq} holds for all $p\in \Rd$ and $\omega\in \Omega_0$.
\end{remark}

\section{The proof of the homogenization of the distance functions} \label{H}

To homogenize the distance functions, we proceed in two steps: first, we use the subadditive ergodic theorem to show that the distance functions have an almost sure limit. Then we identify this limit with the help of \eqref{assubseqeq}.

\begin{proof}[{Proof of Proposition~\ref{conehmg}}]
In light of \eqref{laidout}, we can write the limit \eqref{limhmgdf} as
\begin{equation*}
t^{-1} d_{\mu,t x_0}(ty,\omega) \rightarrow \overline d_\mu(y-x_0) \quad \mbox{as} \ t \to \infty.
\end{equation*}
In order to apply the subadditive ergodic theorem, we must verify that the distance functions are subadditive in the sense that for every $x,y,z\in \Rd$ and a.s. in $\omega$,
\begin{equation} \label{msubadd}
d_{\mu,x}(y,\omega) \leq d_{\mu,z} (y,\omega) + d_{\mu,x} (z,\omega).
\end{equation}
With $w$ denoting the subcorrector constructed in the first step of the proof of Proposition~\ref{mainstep} for $p=p_*$ and using Proposition~\ref{metcomp}, we deduce that, for all $x,y\in \Rd$ and a.s. in $\omega$,
\begin{equation*}
w(y,\omega) - w(x,\omega) \leq p_*\cdot (y-x) + d_{\mu,x} (y,\omega).
\end{equation*}
Interchanging $x$ and $y$ and then adding the resulting inequalities together, we obtain
\begin{equation} \label{subaddpr}
0 \leq d_{\mu,x} (y,\omega) + d_{\mu,y} (x,\omega).
\end{equation}
Thinking of both sides of \eqref{msubadd} as a function of $y$ with $x$ and $z$ fixed, noting that the inequality holds at both $y=x$ and $y=z$ (the former is \eqref{subaddpr} and the latter is obvious), and applying Proposition~\ref{metcomp} with $D = \{ x,z\}$, we obtain \eqref{msubadd}.

We now apply the subadditive ergodic theorem, as stated in Proposition~\ref{SAergthm}, for fixed $\mu$ and $y\in \Rd$, with $Q( [s,t) )(\omega) = d_{\mu,sy}(ty,\omega)$ and $\sigma_t = \tau_{ty}$. We extend $Q(\cdot)(\omega)$ to $\mathcal{J}$ in the obvious way. Using \eqref{distfunstat}, the uniform Lipschitz continuity of the family $\{ d_{\mu,x_0} \, : \, x_0\in \Rd\}$ and \eqref{msubadd}, we easily check that $Q$ is a continuous subadditive process. Proposition~\ref{SAergthm} now provides, for each $y\in \Rd$ and $\mu > \min \overline H$, a random variable $a_\mu(y,\omega)$ such that
\begin{equation} \label{SAcongapp}
t^{-1} d_{\mu,0}(ty,\omega) = a_\mu(y,\omega) \quad \mbox{a.s. in} \ \omega.
\end{equation}

From \eqref{distfunlip} and \eqref{distfunstat} we have that
\begin{equation*}
\limsup_{t \to \infty} t^{-1} d_{\mu,0}(ty,0,\tau_z\omega) = \limsup_{t \to \infty} t^{-1} d_{\mu,z}(ty+z,\omega) = \limsup_{t \to \infty} t^{-1} d_{\mu,0}(ty,0,\omega).
\end{equation*}
Thus the set $\{ \omega \in \Omega_1: \limsup_{t\to \infty} t^{-1} m_\mu(ty,0,\omega) \leq k \}$ is invariant under $\tau_z$, for each $k\in \R$. The ergodic hypothesis implies that $a_\mu$ can be taken independent of $\omega$, i.e., $a_\mu(y,\omega) = a_\mu(y)$. It is clear that $a_\mu$ is positively homogeneous and $a_\mu(0)= 0$. According to \eqref{EeqBC}, we have $a_\mu > 0$ and hence
\begin{equation} \label{amupos}
\inf_{y\neq 0} |y|^{-1} a_\mu(y) > 0.
\end{equation}
Finally, $a_\mu$ is Lipschitz by \eqref{distfunlip}.

To complete the proof that $a_\mu = \overline d_\mu$, we first show that $a_\mu$ is a solution of
\begin{equation}\label{meteq-bar}
\widetilde H( Da_\mu) = \mu \quad \mbox{in} \ \Rd\setminus \{ 0 \},
\end{equation}
and then show that $\widetilde H = \overline H$. Suppose that $\varphi$ is a smooth function and $x_0 \neq 0$ are such that
\begin{equation} \label{Mmuslm}
x \mapsto a_\mu(x) - \varphi(x) \quad \mbox{has a strict local maximum at} \quad x=x_0.
\end{equation}
We show, using the classical perturbed test function method, that
\begin{equation} \label{Mmuwts}
\widetilde H(p+D\varphi(x_0)) \leq \mu.
\end{equation}
 Arguing by contradiction, we assume that $\theta: = \widetilde H(p+D\varphi(x_0)) - \mu > 0$. Take $\{ \delta_j \}$ to be the subsequence described in Remark~\ref{assubseq}, along which we have \eqref{assubseqeq}. Set $p_1 : = p+D\varphi(x_0)$, and define the perturbed test function
\begin{equation*}
\varphi_j(x) : = \varphi(x) + \delta_j v^{\delta_j} \big(\frac x{\delta_j}, \omega; p_1 \big) + \widetilde H(p_1).
\end{equation*}
We claim that, for all sufficiently large $j$ and sufficiently small $r> 0$, $\varphi_j$ satisfies
\begin{equation} \label{vjclmsdf}
H(p+D\varphi_j,x/\delta_j,\omega) \geq \mu + \frac12 \theta \quad \mbox{in} \ B(x_0,r).
\end{equation}
Since $\varphi_j$ is not smooth in general, we verify the inequality in the viscosity sense. To this end, select a smooth function $\psi$ and a point $x_1 \in B(x_0,r)$ at which $\varphi_j - \psi$ has a local minimum. It follows that
\begin{equation*}
y\mapsto  v^{\delta_j}(y,\omega;p_1) - \delta_j^{-1}\big( \psi(\delta_jy) - \varphi(\delta_jy)  \big) \quad \mbox{has a local minimum at} \quad y= x_1/\delta_j.
\end{equation*}
Using the equation for $v^{\delta_j}$, we obtain
\begin{equation} \label{ptfm1p}
 \delta_j v^{\delta_j}(x_1/\delta_j,\omega;p_1) + H(p+D\psi(x_1), x_1/\delta_j , \omega) \geq 0.
 \end{equation}
The observations above yield, for small $r>0$ and large $j$,
\begin{equation*}
H(p+D\psi,x_1/\delta_j,\omega) \geq \overline H(p_1) - \frac12 \theta = \mu + \frac12 \theta.
\end{equation*}
This confirms the claim \eqref{vjclmsdf} in the viscosity sense.

The comparison principle implies that $d^{\delta_j}_{\mu,0} - \varphi_j$ cannot have a local maximum in $B(x_0,r)$. Sending $j\to \infty$ and using \eqref{SAcongapp}, we obtain a contradiction to \eqref{Mmuslm}. This completes the proof that $a_\mu$ is a subsolution of \eqref{meteq-bar}. The argument that $a_\mu$ is a supersolution of \eqref{meteq-bar} is nearly identical, and so is omitted. We conclude that $a_\mu$ is a solution of \eqref{meteq-bar}.

We now show that $\widetilde H = \overline H$. Using Remark~\ref{remkey} and the fact that, for every $\mu > \overline H(p)$,  the function $a_\mu$ is a solution of \eqref{meteq-bar}, we conclude that $\widetilde H (p) \leq \overline H(p)$. The inequality $\overline H(p) \leq \widetilde H(p)$ is clear from the definition of $\overline H$ and the existence of the subcorrector in the proof of Proposition~\ref{mainstep}. Hence $\widetilde H = \overline H$.

Therefore, $a_\mu$ satisfies $\overline H(p+Da_\mu) = \mu$  in $\Rd\setminus \{0 \}$. We have $a_\mu = \overline d_\mu$ by uniqueness.
\end{proof}

\section*{Acknowledgements}
The first author was partially supported by NSF Grant DMS-1004645 and the second author by NSF Grant DMS-0901802. The first author also thanks Charlie Smart and Vesa Julin for helpful conversations. We acknowledge an anonymous referee for suggesting the example at the end of Section~\ref{TOW}.

\small
\bibliographystyle{plain}
\bibliography{ah}

\end{document}